\newtheorem{theorem}{Theorem}[section]
\newtheorem{prop}[theorem]{Proposition}
\newtheorem{dfn}[theorem]{Definition}
\newtheorem{con}[theorem]{Conjecture}
\numberwithin{equation}{section}
\begin{document}
\title{\textbf{ Majority out-dominating functions in digraphs} }
\author{ {\bf \sc   Mart\'{\i}n Manrique$^*$,   Karam Ebadi$^\dag$, and Akbar Azami$^\dag$  }\\ \\{\footnotesize $^*$ $n$-CARDMATH, Kalasalingam University, India.} \\ {\footnotesize  {\it   martin.manrique@gmail.com}}\\\\ {\footnotesize  $^\dag$ Department of Mathematics, Islamic Azad University of Miandoab, Iran.}\\{\footnotesize  {\it karam\_ebadi@yahoo.com, akbarazami10@yahoo.com }}\\
}

\date{}
\maketitle

\begin{abstract}
At least two different notions have been published under the name "majority domination in graphs": Majority dominating functions and majority dominating sets. In this work we
extend the former concept to digraphs.  Given a digraph $D=(V,A),$ a function $f : V \rightarrow \{-1,1\}$ such that $f(N^+[v])\geq1$ for at least half of the vertices $v$ in $V$ is  a \textit{ majority out-dominating function } (MODF)  of $D.$ The \textit{weight} of a MODF $f$ is $w(f)=\sum\limits_{v\in V}f(v),$ and the minimum weight of a  MODF in $D$ is
the {\it majority out-domination number}   of $D,$ denoted $\gamma^+_{maj}(D).$ In this work we introduce these concepts and prove some results regarding them, among which the fact that the decision problem of finding a  majority out-dominating function of a given weight is NP-complete.
\end{abstract}

{\bf MSC 2010:} 05C20, 05C69, 05C22.\\

 \textbf{Key words}:  Out-domination, majority dominating function, orientation of a graph.

\section{Introduction}

Several different kinds of situations can be modeled with the help of majority out-dominating functions in digraphs. Basically, by assigning either $-1$ or $+1$ to each vertex of a (di)graph, we are partitioning the set of vertices into a set of "bad" elements and a set of "good" elements. In this context, adjacency may be interpreted as "influence". In a graph, if element $u$ influences element $v,$ then $v$ influences $u$ as well; in a digraph this does not necessarily hold.

Typical examples of applications of majority dominating functions, both in digraphs and in undirected graphs, arise in the context of democracy. However, there are other areas where they are useful.

For example, consider the case of a natural reserve. We can assign $-1$ to communities inside the reserve where some furtive hunters live, and $1$ to ranger posts. Adjacency means the possibility to reach one place from another in a short time. It is directional due to the characteristics of the land (for example, up-hill vs. down-hill). If $f(N^+[v])\geq1$ for a given spot $v$, this means there are enough rangers to stop the hunting there. Then a MODF of minimum weight is a distribution with a minimum number of ranger posts such that at least half of the reserve is safe. Of course, it would be better to protect the whole reserve; this kind of application becomes relevant when the number of rangers available is very limited.\\

Section 2 is focused on the basic definitions used in the paper. In Section 3 we determine $\gamma^+_{maj}$ for some standard classes of digraphs and prove general results about MODFs, mainly regarding the deletion of a vertex and that of an arc, as well as the inversion of an arc. In Section 4 we deal with orientations of a graph $G$: Which is the maximum and minimum for $\gamma^+_{maj}(D)$ such that $D$ is an orientation of $G$? We answer the question for some standard classes of graphs. Section 5 is the proof that the decision problem of finding a MODF of a given weight is NP-complete.

\section{Fundamentals}

Throughout this paper $D=(V,A)$ is a finite directed graph with neither
loops nor multiple arcs (but pairs of opposite arcs are allowed) and $%
G=(V,E) $ is a finite undirected graph with neither loops nor multiple
edges. Unless stated otherwise, $n$ denotes the order of $D$ (or $G$), that is, $n=|V|$. For basic terminology on graphs and digraphs we refer to \cite{cl}, and for a monograph regarding domination in digraphs we refer to Chapter 15 in \cite{h}.

Let $G=(V,E)$ be a graph. For any vertex $u\in V$, the set $%
N_G(u)=\{v: uv\in E\}$ is called the
neighborhood  of $u$ in $G.$  $N_G[u]=N_G(u)\cup \{u\}$ is the closed neighborhood of $u$ in $G.$ The degree of $u$ in $G$ is $%
d_G(u)=|N_G(u)|.$ When the graph $G$ is clear from the context, we may write simply $N(u),$  $N[u],$ and $d(u).$

Let $D=(V,A)$ be a digraph. For any vertex $u\in V$, the sets $%
N^{+}_D(u)=\{v: uv\in A\}$ and $N^{-}_D(u)=\{v: vu\in A\}$ are called the
out-neighborhood and in-neighborhood of $u$ in $D,$ respectively. $N^{+}_D[u]=N^{+}_D(u)\cup \{u\}$ is the closed out-neighborhood of $u$ in $D,$ and $N^{-}_D[u]=N^{-}_D(u)\cup \{u\}$ is the closed in-neighborhood of $u$ in $D.$ The in-degree and out-degree of $u$ in $D$ are defined by $%
d^{-}_D(u)=|N^{-}_D(u)|$ and $d^{+}_D(u)=|N^{+}_D(u)|,$ respectively. The maximum out-degree of the vertices of $D$ is $\delta^+ (D)$. A set $S\subseteq V$ is in-dominating (or absorbent) if every vertex $v\in V\setminus S$ has an out-neighbor in $S$. The minimum cardinality of an in-dominating set in $D$ is denoted $\gamma^- (D)$.  When the digraph $D$ is clear from the context, we may write simply $N^+(u),$  $d^+(u),$ etc.\\

Let $G=(V,E)$ be a graph. A majority dominating function \cite{mf} is a function
$f:V\rightarrow \{-1,1\}$ such that the set $S=\{v\in V:\sum\limits_{u\in
N[v]}f(u)\geq 1\}~$satisfies $|S|~\geq \frac{n}{2};$ the weight of a
majority dominating function is $w(f)=\sum\limits_{v\in V}f(v),$ and $\min
\{w(f):f$ is a majority dominating function in $G\}$ is the majority
domination number of $G,$ denoted $\gamma _{maj}(G);$ a majority dominating
function $f$ in $G$ such that $w(f)=\gamma _{maj}(G)$ is a $\gamma _{maj}(G)$%
-function.

This concept can be naturally extended to digraphs: Given a digraph $D=(V,A),$ a
majority out-dominating function (MODF) of $D$ is a function $f:V\rightarrow
\{-1,1\}$ such that the set $S=\{v\in V:\sum\limits_{u\in N^{+}[v]}f(u)\geq
1\}~$satisfies $|S|~\geq \frac{n}{2};$ the weight of $f$ is $%
w(f)=\sum\limits_{v\in V}f(v),$ and $\min \{w(f):f$ is a MODF of $D\}$ is the majority out-domination number of $D,
$ denoted $\gamma _{maj}^{+}(D);$ a MODF $f$ of $%
D$ such that $w(f)=\gamma _{maj}^{+}(D)$ is a $\gamma _{maj}^{+}(D)$%
-function. For a set $X\subseteq V$ and a function $f:V\rightarrow
\{-1,1\},$ we will use $f(X)=\sum\limits_{v\in X}f(v),$ so $w(f)=f(V).$

Of course, we can define majority in-dominating functions analogously. However, as happens with kernels and solutions, the concepts are equivalent since a function $f$ is a majority in-dominating function of a given digraph $D$ if, and only if, $f$ is a MODF of $\overleftarrow{D},$ where $\overleftarrow{D}$ is the digraph resulting from reversing all the arcs of $D.$

It is also possible to define majority out-dominating sets in digraphs. This is done in \cite{mase}. However, the relation between this notion and that of majority out-dominating function is weak, and was pointed out in the paper mentioned, so we will not even define the concept here.

\section{MODFs in digraphs}

We start this section by determining the majority out domination number for some special classes of digraphs. Observe that for a digraph $D=(V,A)$ and a MODF $f$ of $D,$ $w(f)=|f^{-1}(1)|-|f^{-1}(-1)|$:

\begin{prop}\label{prop34}
Let $C_n$ denote the directed cycle with $n$ vertices. Then
 \[\gamma^+_{maj}(C_n) = \left\lbrace
  \begin{array}{c l}
  2  & \text{if  $  n$ is even }\\
    3  & \text{if  $  n$ is odd }
  \end{array}
\right. \]
\end{prop}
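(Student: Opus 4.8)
The plan is to exploit the very simple local structure of the directed cycle. Label the vertices $v_1, v_2, \ldots, v_n$ so that the arcs are $v_iv_{i+1}$ with indices read modulo $n$; then $N^+[v_i] = \{v_i, v_{i+1}\}$ for every $i$. Since $f$ takes values in $\{-1,1\}$, the sum $f(N^+[v_i]) = f(v_i) + f(v_{i+1})$ lies in $\{-2, 0, 2\}$, so the condition $f(N^+[v_i]) \geq 1$ is equivalent to $f(v_i) = f(v_{i+1}) = 1$. Hence the set $S$ in the definition of a MODF is exactly the set of indices $i$ for which $v_i$ and its out-neighbor $v_{i+1}$ are both assigned $+1$, and the whole problem reduces to a question about cyclic binary strings.

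First I would set $p = |f^{-1}(1)|$, so that $w(f) = 2p - n$ and minimizing the weight is the same as minimizing $p$. The key combinatorial step is to count $|S|$ in terms of the arrangement of the $+1$s around the cycle. If at least one vertex receives $-1$, the vertices assigned $+1$ split into $k \geq 1$ maximal arcs (blocks) of consecutive $+1$s, and a block of length $\ell$ contributes exactly $\ell - 1$ indices to $S$ (the last vertex of the block is followed by a $-1$, so it is excluded), whence $|S| = p - k$. The degenerate case in which every vertex receives $+1$ must be noted separately: there $|S| = n$ and $w(f) = n$, which is never smaller than the value claimed, so it does not affect the minimization.

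Next I would extract the lower bound. Because $|S|$ is an integer, the defining inequality $|S| \geq \tfrac{n}{2}$ is equivalent to $|S| \geq \lceil \tfrac{n}{2} \rceil$. Combined with $|S| = p - k$ and $k \geq 1$ this gives $p \geq \lceil \tfrac{n}{2} \rceil + 1$, whence $w(f) = 2p - n \geq 2\lceil \tfrac{n}{2}\rceil + 2 - n$. Evaluating the right-hand side according to the parity of $n$ yields the bound $2$ when $n$ is even and $3$ when $n$ is odd. Finally, to show these bounds are attained, I would exhibit the function that assigns $+1$ to a single block of $\lceil \tfrac{n}{2} \rceil + 1$ consecutive vertices and $-1$ to the rest; this function has $k = 1$, hence $|S| = \lceil \tfrac{n}{2} \rceil \geq \tfrac{n}{2}$, so it is a MODF, and its weight equals the lower bound.

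The routine parts are the block count and the final arithmetic; the one point that needs care is the cyclic nature of the configuration, namely keeping the all-$+1$ case apart (where the relation $|S| = p - k$ fails because there is no separating $-1$) and correctly passing from $|S| \geq \tfrac{n}{2}$ to the integer inequality $|S| \geq \lceil \tfrac{n}{2} \rceil$, which is exactly what produces the difference between the even and odd answers.
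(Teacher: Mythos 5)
Your proof is correct and follows essentially the same route as the paper: both exploit that $N^+[v_i]=\{v_i,v_{i+1}\}$ forces $f(N^+[v_i])\geq 1$ exactly when $f(v_i)=f(v_{i+1})=1$, derive the lower bound $|f^{-1}(1)|\geq\lceil \tfrac{n}{2}\rceil+1$ by a run-counting argument (the paper counts the vertices made non-positive by the $-1$'s, you count $|S|=p-k$ via blocks of $+1$'s — dual views of the same observation), and exhibit the identical witness function consisting of a single block of $\lceil \tfrac{n}{2}\rceil+1$ consecutive $+1$'s. If anything, your explicit block decomposition makes rigorous the counting step that the paper only asserts.
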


\begin{proof}
We number the vertices of $C_n=(V,A)$ in order, that is, $V=\{v_1,...,v_n\}$ with $N^+(v_i)=\{v_{i+1}\},$ where "$+$" denotes the addition modulo $n.$  Let $f$ be a MODF of $C_n.$ Notice that if $f(v_j)=-1,$ then $f(N^+[v_j])$ and $f(N^+[v_{j-1}])$ are non-positive, where "$-$" denotes the inverse operation of $+.$ Therefore, if we have $f(x)=-1$ for $\lfloor\frac{n}2\rfloor$ vertices, then $f(N^+[x])$ will be non-positive for at least $\lfloor\frac{n}2\rfloor+1> \frac{n}2 $ vertices. It follows that at least $\lceil \frac{n}2 \rceil+1$ vertices must satisfy  $f(x)=1.$ Therefore, $\gamma^+_{maj}(C_n)\geq 2$ if $n$ is even, and $\gamma^+_{maj}(C_n)\geq 3$ if $n$ is odd.

On the other hand, the function $g:V\rightarrow\{-1,1\}$ such that $g(v_i)=1$ for $1\leq i\leq\lceil \frac{n}2 \rceil+1$ and $g(v_i)=-1$ otherwise,  is a MODF of $C_n.$
\end{proof}

\begin{prop} \label{prop333}
Let $P_n$ denote the directed path with $n$ vertices. Then
 \[\gamma^+_{maj}(P_n) = \left\lbrace
  \begin{array}{c l}
  0  & \text{if  $  n$ is even }\\
    1  & \text{if  $  n$ is odd }
  \end{array}
\right. \]
\end{prop}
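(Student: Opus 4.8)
The plan is to mirror the structure of the proof of Proposition \ref{prop34}: establish a lower bound on the weight by a counting argument, then exhibit an explicit MODF meeting it. First I would fix notation, numbering the vertices of $P_n=(V,A)$ so that $V=\{v_1,\dots,v_n\}$ and $N^+(v_i)=\{v_{i+1}\}$ for $1\le i\le n-1$, while $N^+(v_n)=\emptyset$. Thus $N^+[v_i]=\{v_i,v_{i+1}\}$ for $i<n$ and $N^+[v_n]=\{v_n\}$. The key elementary remark is that for a function $f:V\to\{-1,1\}$ the sum $f(N^+[v_i])=f(v_i)+f(v_{i+1})$ of two values in $\{-1,1\}$ can only be $-2,0$ or $2$, so the condition $f(N^+[v_i])\ge 1$ is equivalent to $f(v_i)=f(v_{i+1})=1$; likewise $f(N^+[v_n])\ge 1$ is equivalent to $f(v_n)=1$.

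For the lower bound, let $f$ be any MODF of $P_n$ and $S=\{v:f(N^+[v])\ge 1\}$. The remark above shows that every vertex $v\in S$ satisfies $f(v)=1$: indeed, membership of $v_i$ in $S$ forces $f(v_i)=1$ both when $i<n$ and when $i=n$. Hence $S\subseteq f^{-1}(1)$ and $|S|\le |f^{-1}(1)|$. Since $f$ is a MODF we have $|S|\ge \frac n2$, so $|f^{-1}(1)|\ge\lceil \frac n2\rceil$. Using $w(f)=|f^{-1}(1)|-|f^{-1}(-1)|=2|f^{-1}(1)|-n$ (as noted before Proposition \ref{prop34}), this yields $w(f)\ge 2\lceil\frac n2\rceil-n$, which equals $0$ for $n$ even and $1$ for $n$ odd.

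For the matching upper bound I would exhibit a single function. Set $a=\lceil\frac n2\rceil$ and define $g(v_i)=1$ for $n-a+1\le i\le n$ and $g(v_i)=-1$ otherwise, so that the vertices assigned $1$ form one consecutive block ending at $v_n$. Then $g(N^+[v_i])=2$ for each $i$ with $n-a+1\le i\le n-1$ (there are $a-1$ such indices) and $g(N^+[v_n])=g(v_n)=1$, so the associated set $S_g$ has $|S_g|=a=\lceil\frac n2\rceil\ge\frac n2$ and $g$ is a MODF. Its weight is $w(g)=2a-n=2\lceil\frac n2\rceil-n$, matching the lower bound, which proves the claimed values.

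The computation is short; the only point requiring care is the boundary vertex $v_n$, whose closed out-neighborhood is the singleton $\{v_n\}$ rather than a pair, together with the parity bookkeeping that turns $2\lceil\frac n2\rceil-n$ into $0$ or $1$. I expect the main (modest) obstacle to be arguing the containment $S\subseteq f^{-1}(1)$ uniformly across the two cases $i<n$ and $i=n$, since it is this containment---rather than the more delicate block-counting used in the cycle case---that drives the entire lower bound.
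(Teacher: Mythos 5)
Your proof is correct and follows essentially the same route as the paper's: a counting lower bound showing $|f^{-1}(1)|\geq\lceil \frac{n}2\rceil$, matched by the identical block function (your $g$ assigning $1$ to the last $\lceil \frac{n}2\rceil$ vertices is exactly the paper's construction). The only cosmetic difference is that you state the key fact directly as the containment $S\subseteq f^{-1}(1)$, whereas the paper phrases the same observation contrapositively (a vertex with value $-1$ makes itself and its in-neighbor unsatisfied), mirroring its cycle proof.
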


\begin{proof}
The proof is similar to that of Proposition \ref{prop34}: We number the vertices of $P_n=(V,A)$ in order, that is, $V=\{v_1,...,v_n\}$ with $N^+(v_i)=\{v_{i+1}\},$ for $i\in\{1,...,n-1\}.$  Then for every   MODF $f$ of $P_n$ and every $j\in\{2,...,n\},$  if $f(v_j)=-1$ we have that $f(N^+[v_j])$ and $f(N^+[v_{j-1}])$ are non-positive, while $f(v_1)=-1$ implies as well that $f(N^+[v_1])$ is non-positive.   Therefore,  at least $\lfloor \frac{n-1}2 \rfloor+1=\lceil \frac{n}2 \rceil$ vertices must satisfy  $f(x)=1.$ It follows that $\gamma^+_{maj}(P_n)\geq 0$ if $n$ is even, and $\gamma^+_{maj}(P_n)\geq 1$ if $n$ is odd.

Conversely, the function $g:V\rightarrow\{-1,1\}$ such that $g(v_i)=-1$ for $1\leq i\leq\lfloor\frac{n}2\rfloor$ and $g(v_i)=1$ for  $\lfloor\frac{n}2\rfloor+1\leq i\leq n$ is a MODF of $P_n.$
\end{proof}

\begin{dfn}
A digraph $D=(V,A)$ is transitive if for every $\{u,v,w\}\subseteq V$, $uv\in A$ and $vw\in A$ imply $uw\in A$. A tournament is an orientation of a complete graph.
\end{dfn}

\begin{theorem} \label {thm 35}
For every transitive tournament $T$, $\gamma^+_{maj}(T)=-n+2\lceil \frac{n+2}4 \rceil.$
\end{theorem}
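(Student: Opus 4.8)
The plan is to exploit the canonical vertex ordering of a transitive tournament. Label the vertices $v_1,\dots,v_n$ so that $v_iv_j\in A$ exactly when $i<j$; then $N^+[v_i]=\{v_i,v_{i+1},\dots,v_n\}$, and for any $f:V\to\{-1,1\}$ the quantity $f(N^+[v_i])$ is precisely the suffix sum $s_i=\sum_{j=i}^n f(v_j)$. Setting $s_{n+1}=0$, consecutive suffix sums differ by $s_i-s_{i+1}=f(v_i)\in\{-1,1\}$, so the reversed sequence $W_k:=s_{n+1-k}$ (for $k=0,\dots,n$) is a $\pm 1$ walk with $W_0=0$. Writing $U=|f^{-1}(1)|$, this walk has exactly $U$ up-steps and $w(f)=2U-n$; hence minimizing the weight is the same as minimizing $U$ subject to the constraint that $W_k\geq 1$ (equivalently $s_{n+1-k}\geq 1$) for at least $m:=\lceil n/2\rceil$ of the indices $k\in\{1,\dots,n\}$.

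The crux is the lower bound, for which I would prove the following extremal fact about the walk: a $\pm1$ walk from $0$ with $U$ up-steps is at a positive level for at most $2U-1$ of its positions (and none if $U=0$). I would establish this by partitioning the positive positions into their maximal runs. Each maximal run is entered from level $0$ by an up-step, so the walk equals $1$ at the entry, and a short accounting of up- versus down-steps within the run shows that a run containing $L$ positive positions uses at least $(L+1)/2$ up-steps, hence contributes at most $2u-1$ positive positions, where $u$ is the number of up-steps it uses. Since distinct runs use disjoint sets of up-steps and there is at least one run, summing over the $r\geq 1$ runs gives at most $\sum_j(2u_j-1)=2\sum_j u_j - r\leq 2U-1$ positive positions. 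This is the step I expect to require the most care, in particular the boundary run that may reach position $n$ without an exit down-step, which must be checked to obey the same estimate.

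Applying the bound with $|S|\geq m$ forces $2U-1\geq m$, hence $U\geq\lceil (m+1)/2\rceil=\lceil(\lceil n/2\rceil+1)/2\rceil$. A routine case analysis modulo $4$ then identifies this with $\lceil (n+2)/4\rceil$, so that $w(f)=2U-n\geq -n+2\lceil (n+2)/4\rceil$ for every MODF $f$ of $T$.

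Finally I would match this with an explicit MODF: assign $f(v_i)=1$ to the last $U=\lceil (n+2)/4\rceil$ vertices and $f(v_i)=-1$ to the remaining ones. A direct computation of the suffix sums shows that the last $U$ positions, together with the $U-1$ positions immediately below them, all satisfy $s_i\geq 1$, giving $2U-1\geq m$ vertices in $S$; thus $f$ is a MODF of weight exactly $-n+2\lceil (n+2)/4\rceil$. Combining the two bounds yields the claimed value of $\gamma^+_{maj}(T)$.
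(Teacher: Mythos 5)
Your proof is correct, and it follows the same overall strategy as the paper's: both rest on the fact that a function with $U$ ones can satisfy $f(N^+[v])\geq 1$ for at most $2U-1$ vertices, both extract $U\geq\lceil\frac{n+2}4\rceil$ from the majority condition by the same arithmetic, and both finish with the identical extremal function (ones on the $\lceil\frac{n+2}4\rceil$ vertices of smallest out-degree, i.e.\ the tail of the transitive order). Where you genuinely differ is in how the key bound $|S|\leq 2U-1$ is justified. The paper reduces to a single configuration via an exchange-type extremality claim --- ``even if the vertices whose value under $f$ is $1$ are those having the greatest in-degree'' --- and counts only in that case; the extremality itself is asserted, not proved. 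You instead encode the values $f(N^+[v_i])$ as suffix sums, view them as a $\pm 1$ walk started at $0$, and prove the bound for \emph{every} configuration by decomposing the positive positions into maximal runs: each run is entered from level $0$ by an up-step and ends at level $\geq 1$, so a run of $L$ positive positions consumes at least $\frac{L+1}2$ up-steps, and summing $L\leq 2u-1$ over runs (whose step sets are disjoint) gives $2U-r\leq 2U-1$. Your observation that the boundary run needs no exit down-step --- only $W_b\geq 1$ --- is precisely what keeps the estimate uniform. This run-decomposition lemma is a genuine strengthening: it supplies a rigorous proof of the one step the paper leaves to intuition, at the cost of a somewhat longer argument.
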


\begin{proof}
Let $T=(V,A)$ be a transitive tournament of order $n.$ Notice that both the in-degree sequence and the out-degree sequence of $T$ are $(n-1, n-2, ..., 1, 0).$  Let $f$ be a MODF  of $T,$ and suppose  that $w(f)<-n+2\lceil \frac{n+2}4 \rceil.$ Then $|f^{-1}(1)|\leq\lceil \frac{n+2}4 \rceil-1.$ Therefore, even if the vertices whose value under $f$ is $1$ are those having the greatest in-degree, at most $\lceil \frac{n+2}4 \rceil-2$ vertices $x$ satisfy $f(x)=-1$ and $f(N^+[x])\geq1.$ Since $2\lceil \frac{n+2}4 \rceil-3< \frac{n}2,$ it follows that $f$ is not a MODF of $T,$ which is a contradiction. Then  $|f^{-1}(1)|\geq \lceil \frac{n+2}4 \rceil,$ that is, $w(f)\geq -n+2\lceil \frac{n+2}4 \rceil.$  Therefore, $\gamma^+_{maj}(T)\geq -n+2\lceil \frac{n+2}4 \rceil.$

On the other hand, the function $g:V\rightarrow\{-1,1\}$ such that $g(v)=1$ if $d^+(v)< \lceil \frac{n+2}4 \rceil$ and $g(v)=-1$ if $d^+(v)\geq \lceil \frac{n+2}4 \rceil$ is a MODF of $T,$ because $2\lceil \frac{n+2}4 \rceil-1\geq \frac{n}2.$
\end{proof}

In relation with Theorem \ref{thm 35}, we have the following conjecture:

\begin{con} \label{con 36}
Let $D=(V,A)$ be a digraph such that for every $v\in V, \ d^-(v)+d^+(v)=c$ for some natural number $c$ (that is, the underlying undirected graph of $D$ is regular). Then there is a $\gamma^+_{maj}(D)$-function $f$ such that for every $\{u,v\}\subseteq V,$ $(f(u)=-1, \ f(v)=1) \Rightarrow d^-(u)\leq d^-(v).$
\end{con}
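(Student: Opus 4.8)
The plan is to prove the conjecture by an exchange argument driven by a potential function. First I would record that every $\gamma^+_{maj}(D)$-function has exactly $p=\frac{n+\gamma^+_{maj}(D)}{2}$ vertices of value $1$ and $q=\frac{n-\gamma^+_{maj}(D)}{2}$ vertices of value $-1$, so that these two cardinalities are common to all minimum-weight MODFs. Among all $\gamma^+_{maj}(D)$-functions I would then select one, say $f$, minimizing the potential $\Phi(f)=\sum_{v\in f^{-1}(-1)}d^-(v)$, the total in-degree carried by the negative vertices; such a minimizer exists because $\Phi$ takes only finitely many values. The aim is to show that this distinguished $f$ already satisfies the required monotonicity. (As a sanity check, for a transitive tournament, which is regular with $c=n-1$, the $\gamma^+_{maj}$-function exhibited in Theorem \ref{thm 35} puts $-1$ on the vertices of large out-degree, hence of small in-degree, exactly as the conjecture predicts.)

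Suppose $f$ violated the condition. Then there would exist $u\in f^{-1}(-1)$ and $v\in f^{-1}(1)$ with $d^-(u)>d^-(v)$. I would form $f'$ by interchanging the two values: $f'(u)=1$, $f'(v)=-1$, and $f'(x)=f(x)$ otherwise. Trading one $+1$ for one $-1$ preserves the weight, so $w(f')=\gamma^+_{maj}(D)$, whereas $\Phi(f')=\Phi(f)-d^-(u)+d^-(v)<\Phi(f)$. Thus, as soon as $f'$ is shown to be a MODF, the minimality of $\Phi(f)$ is contradicted and the conjecture follows. Everything reduces to one lemma: the swap of a violating pair again yields a MODF.

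To analyze the swap I would set $\sigma(x)=f(N^+[x])$, $\sigma'(x)=f'(N^+[x])$, and $A_u=N^-[u]$, $A_v=N^-[v]$, so that $|A_u|=d^-(u)+1$ and $|A_v|=d^-(v)+1$. Since a vertex $w$ contributes to $f(N^+[x])$ exactly when $x\in N^-[w]$, one gets $\sigma'(x)-\sigma(x)=+2$ for $x\in A_u\setminus A_v$, $\sigma'(x)-\sigma(x)=-2$ for $x\in A_v\setminus A_u$, and $\sigma'(x)=\sigma(x)$ for every other $x$ (on $A_u\cap A_v$ the two changes cancel). Hence only vertices in the symmetric difference $A_u\triangle A_v$ can change status relative to $S=\{x:\sigma(x)\geq1\}$: a vertex of $A_v\setminus A_u$ leaves $S$ only if $\sigma(x)\in\{1,2\}$, and a vertex of $A_u\setminus A_v$ enters $S$ precisely when $\sigma(x)\in\{-1,0\}$. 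The raw count is favorable, since
\[
|A_u\setminus A_v|-|A_v\setminus A_u|=|A_u|-|A_v|=d^-(u)-d^-(v)>0,
\]
so strictly more vertices are helped than hurt; note that this count uses only the in-degree comparison and not regularity.

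The hard part will be that this count is not sufficient: a helped vertex may already belong to $S$ (yielding no gain), while a hurt vertex may be ``barely good'', with $\sigma(x)\in\{1,2\}$, and genuinely fall out, so that a careless swap could make $|S'|<|S|$. What is really required is an injection from the vertices that leave $S$ into those that enter it, which would upgrade the count into $|S'|\geq|S|\geq\frac{n}{2}$ and finish the proof. This is exactly the place where the regularity hypothesis $d^-(x)+d^+(x)=c$ must be spent, since it has not been used so far; I expect one must either refine the choice of the violating pair (for instance taking $v$ of least in-degree among the positive vertices and $u$ of greatest in-degree among the negative ones) so as to forbid ``barely good'' vertices in $A_v\setminus A_u$, or construct the injection directly by pairing in-neighborhoods through the identity $d^+=c-d^-$. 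Establishing this injection is the step I anticipate to be genuinely delicate, and is presumably why the statement is recorded as a conjecture.
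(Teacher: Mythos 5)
There is no proof in the paper to compare against: the statement you attempted is explicitly labelled a \emph{conjecture} (Conjecture \ref{con 36}), the authors only offer the heuristic remark that assigning $-1$ to vertices of least in-degree ``affects'' fewer vertices, and the concluding section lists proving it (even just for tournaments) as open work. So the only question is whether your proposal closes the conjecture, and it does not: as you yourself flag at the end, the entire argument rests on an unproven swap lemma, namely that exchanging the values of a violating pair $(u,v)$ with $d^-(u)>d^-(v)$ again yields a MODF. Your bookkeeping of $\sigma'(x)-\sigma(x)$ on $A_u\setminus A_v$, $A_v\setminus A_u$, and $A_u\cap A_v$ is correct, and the potential-function setup (minimizing $\Phi(f)=\sum_{v\in f^{-1}(-1)}d^-(v)$ over minimum-weight MODFs) is a reasonable frame, but the step from $|A_u\setminus A_v|>|A_v\setminus A_u|$ to $|S'|\geq|S|$ is exactly the missing content. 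Nothing prevents every vertex of $A_u\setminus A_v$ from satisfying $\sigma(x)\leq -3$ (so none of them enters $S$) while every vertex of $A_v\setminus A_u$ satisfies $\sigma(x)\in\{1,2\}$ (so all of them leave $S$); in that situation the swap strictly shrinks $S$ and $f'$ need not be a MODF, so the minimality of $\Phi$ yields no contradiction. The raw cardinality comparison counts potential gains and losses, not actual ones.

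There is also a structural weakness beyond the missing lemma: the potential-minimization strategy is inherently local. If the single-swap lemma is simply false --- even for the refined choice of $u$ of maximum in-degree among negatives and $v$ of minimum in-degree among positives --- then your argument cannot conclude at all, yet the conjecture could still be true via a global rearrangement of values that no sequence of weight-preserving single swaps through MODFs can reach. So the proposal does not reduce the conjecture to a plausibly easier statement; it reduces it to a statement that may be strictly stronger. To make progress along these lines you would need either (i) a proof of the swap lemma that genuinely spends the regularity hypothesis $d^-(x)+d^+(x)=c$ (which your count never uses, as you note), for instance by constructing an injection from the vertices leaving $S$ into those entering it, or (ii) a counterexample to the swap lemma, which would tell you that this route must be abandoned. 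As it stands, the conjecture remains open and your text is a research plan, not a proof.
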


Intuitively, if we assign $-1$ to the vertices with least in-degree, less vertices will be affected by them, so more vertices may have the value $-1$ while the function is still a MODF. It may be easier to prove the conjecture for tournaments.\\

Now we will prove some general results regarding MODFs:

\begin{dfn}
Let $D=(V,A)$ be a digraph, and let $f$ be a MODF of $D.$ Then $f$ is minimal if there is no $g\neq f$ such that $g$ is a MODF of $D$ and $g(v)\leq f(v)$ for every $v\in V.$
\end{dfn}

\begin{prop} \label{prop44}
Let $D=(V,A)$ be a digraph, and let $f$ be a minimal MODF of $D.$ Then for every $v\in V$ with $f(v)=1,$ there exists $u\in N^-[v]$ such that $f(N^+[u])\in \{1,2\}.$
\end{prop}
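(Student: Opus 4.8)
The plan is to exploit minimality through a single-vertex modification. Fix $v\in V$ with $f(v)=1$, and let $g$ be the function that agrees with $f$ everywhere except that $g(v)=-1$. Then $g(w)\leq f(w)$ for every $w\in V$ and $g\neq f$, so the minimality of $f$ forces $g$ to fail to be a MODF. Writing $S=\{w\in V: f(N^+[w])\geq 1\}$ and $S'=\{w\in V: g(N^+[w])\geq 1\}$, this says $|S'|<\frac{n}{2}\leq|S|$.

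First I would track how lowering $f$ at the single vertex $v$ changes the closed-out-neighborhood sums. Since $g$ and $f$ differ only at $v$, we have $g(N^+[w])=f(N^+[w])$ whenever $v\notin N^+[w]$, and $g(N^+[w])=f(N^+[w])-2$ whenever $v\in N^+[w]$. The point is that $v\in N^+[w]$ holds precisely when $w=v$ or $wv\in A$, that is, precisely when $w\in N^-[v]$. Hence the sums drop by $2$ exactly on $N^-[v]$ and are unchanged off it.

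From this, $S'\subseteq S$ always (decreasing the function cannot create new members of $S$), and a vertex $w$ lies in $S\setminus S'$ exactly when $w\in N^-[v]$ and $f(N^+[w])\geq 1$ but $f(N^+[w])-2<1$, that is, when $w\in N^-[v]$ and $f(N^+[w])\in\{1,2\}$. Because $g$ is not a MODF while $f$ is, we have $|S'|<|S|$, and since $S'\subseteq S$ this means $S\setminus S'\neq\emptyset$. Any $u\in S\setminus S'$ then satisfies $u\in N^-[v]$ and $f(N^+[u])\in\{1,2\}$, which is the desired conclusion.

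The only delicate point is the bookkeeping in the second step: one must correctly identify $\{w: v\in N^+[w]\}$ with $N^-[v]$ (taking care that closed neighborhoods include $v$ itself, so $w=v$ is among these vertices) and verify that the sum at each such $w$ decreases by exactly $2$. Everything else is a short counting argument, so I expect no serious obstacle beyond getting these neighborhood identities right.
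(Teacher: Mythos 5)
Your proposal is correct and takes essentially the same approach as the paper: flip $f$ at the single vertex $v$, observe that the closed out-neighborhood sums drop by exactly $2$ precisely on $N^-[v]$, and invoke minimality. The paper phrases it as a contradiction (assuming no such $u$ exists, the flipped function stays a MODF, contradicting minimality) while you argue contrapositively, but the underlying argument is identical.
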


\begin{proof}
Let $f$ be a MODF of the digraph $D=(V,A).$ Take $v\in V$ with $f(v)=1,$ and suppose that for every $u\in N^-[v]$ we have $f(N^+[u])\notin \{1,2\}.$ Define the function $g:V\rightarrow\{-1,1\}$ as follows: $g(v)=-1,$ and $g(x)=f(x)$ for every $x\in V\setminus\{v\}.$ Then for every $u\in N^-[v]$ we have $g(N^+[u])=f(N^+[u])-2,$ and for every $x\in V\setminus N^-[v]$ we have $g(N^+[u])=f(N^+[u]).$ Therefore, for every vertex $y\in V,$ $g(N^+[y])$ is positive if, and only if, $f(N^+[y])$ is positive, so $g$ is a MODF of  $D.$ Since for every vertex $y\in V,$ $g(y)\leq f(y),$ it follows that $f$ is not minimal.
\end{proof}

It is clear that the converse of Proposition \ref{prop44} does not hold. For example, in a directed path $P_n$ consider the function assigning $1$ to every vertex.\\

It is interesting to explore the effect in $\gamma^+_{maj}$ of the removal of an arc or a vertex, as well as that of reversing one arc:

\begin{theorem}\label{prop 31}
Let $D=(V,A)$ be a digraph, and let $D'=(V,A')$ be a digraph obtained by reversing one arc of $D,$ that is, for some $uv\in A,$ $A'=A\setminus\{uv\}\cup\{vu\}.$ Then $\gamma^+_{maj}(D)-2\leq \gamma^+_{maj}(D')\leq\gamma^+_{maj}(D)+2.$ The bounds are sharp.
\end{theorem}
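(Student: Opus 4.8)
The plan is to exploit the fact that reversing a single arc $uv\to vu$ alters the closed out-neighborhood of only two vertices: one has $N^+_{D'}[u]=N^+_D[u]\setminus\{v\}$ and $N^+_{D'}[v]=N^+_D[v]\cup\{u\}$, while $N^+_{D'}[w]=N^+_D[w]$ for every $w\neq u,v$. (Here I assume the reversal is genuine, i.e.\ $vu\notin A$; if $vu\in A$ already, the operation merely deletes $uv$, only $N^+[u]$ changes, and the analysis below specializes.) Consequently, for any $f:V\to\{-1,1\}$ one has $f(N^+_{D'}[u])=f(N^+_D[u])-f(v)$ and $f(N^+_{D'}[v])=f(N^+_D[v])+f(u)$, whereas all remaining closed-out-neighborhood sums are unchanged. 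Thus the only vertices whose membership in the ``satisfied'' set $S=\{x:f(N^+[x])\ge1\}$ can differ between $D$ and $D'$ are $u$ and $v$.

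First I would prove the upper bound $\gamma^+_{maj}(D')\le\gamma^+_{maj}(D)+2$. Starting from a $\gamma^+_{maj}(D)$-function $f$, I produce a MODF $g$ of $D'$ with $w(g)\le w(f)+2$ by raising the value of a single vertex from $-1$ to $+1$, an operation that increases the weight by exactly $2$ and, crucially, never decreases any out-sum. If $f(u)=-1$, take $g$ equal to $f$ except $g(u)=1$; a direct computation gives $g(N^+_{D'}[u])=f(N^+_D[u])+2-f(v)\ge f(N^+_D[u])$ and $g(N^+_{D'}[v])=f(N^+_D[v])+1$, while every other out-sum can only rise, so the satisfied set of $g$ in $D'$ contains that of $f$ in $D$ and $g$ is a MODF. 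If instead $f(u)=1$, then the out-sum of $v$ can only increase, so the single vertex that might leave $S$ is $u$, and only when $f(v)=1$; in that case, if $u$ does leave $S$ then $f(N^+_{D'}[u])=0$, whence $N^+_{D'}[u]$ contains a $-1$-valued vertex whose raising restores $u$ to $S$ without unsatisfying anyone (and if $u$ does not leave $S$, then $f$ itself is already a MODF of $D'$).

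The lower bound $\gamma^+_{maj}(D')\ge\gamma^+_{maj}(D)-2$ then comes for free by symmetry, since $D$ is recovered from $D'$ by reversing the arc $vu$; applying the upper bound to the pair $(D',D)$ gives $\gamma^+_{maj}(D)\le\gamma^+_{maj}(D')+2$. For sharpness I would use the directed triangle $C_3$ with arcs $v_1v_2,v_2v_3,v_3v_1$: by Proposition \ref{prop34}, $\gamma^+_{maj}(C_3)=3$. Reversing $v_1v_2$ yields a digraph $D'$ in which $v_1$ becomes a sink and $v_2$ a source of out-degree $2$; a short check over the eight sign patterns (weights lie in $\{-3,-1,1,3\}$) shows that every MODF of $D'$ has weight at least $1$ and that weight $1$ is attained, so $\gamma^+_{maj}(D')=1=\gamma^+_{maj}(C_3)-2$, giving sharpness of the lower bound. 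Reading the same pair in the opposite direction (reversing $v_2v_1$ in $D'$ to recover $C_3$) shows the bound $+2$ is attained as well.

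I expect the main obstacle to be the upper-bound construction in the case $f(u)=-1$ and $f(v)=1$, where \emph{both} $u$ and $v$ may leave $S$ at once; the point to verify carefully is that the single raise $g(u)=1$ repairs both defects simultaneously, precisely because $u\in N^+_{D'}[v]$ forces that same modification to increase the out-sum of $v$. Checking the displayed identities and the parity argument guaranteeing an available $-1$-vertex in $N^+_{D'}[u]$ when $f(u)=1$ are the remaining routine verifications.
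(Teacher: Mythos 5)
Your proposal is correct and takes essentially the same approach as the paper's proof: repair a $\gamma^+_{maj}(D)$-function by raising at most one vertex from $-1$ to $+1$ (case analysis on the values at $u$ and $v$, using that such a raise never decreases any out-sum and that $f(N^+_{D'}[u])=0$ forces a $-1$-vertex in $N^+_{D'}[u]$), get the lower bound by symmetry of reversal, and certify sharpness with the directed triangle $C_3$. Your merging of the paper's four cases into two (on $f(u)$ alone) and your explicit handling of the degenerate situation $vu\in A$ are minor streamlinings of the same argument, not a different route.
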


\begin{proof}
Let $f$ be a $\gamma^+_{maj}(D)$-function and take $uv\in A.$ If $f(u)=f(v)=-1,$ then $f(N^+_{D'}[u])= f(N^+_{D}[u])+1,$ and $f(N^+_{D'}[v])= f(N^+_{D}[v])-1,$ while $f(N^+_{D'}[x])= f(N^+_{D}[x])$ for every $x\in V\setminus \{u,v\}.$ Therefore, the function $g:V\rightarrow \{-1,1\}$ such that $g(u)=1$ and $g(x)=f(x)$ for every $x\in V\setminus \{u\}$ is a MODF of $D'$ with $w(g)=w(f)+2.$

If $f(u)=f(v)=1,$ then $f(N^+_{D'}[u])= f(N^+_{D}[u])-1,$ and $f(N^+_{D'}[v])= f(N^+_{D}[v])+1,$ while $f(N^+_{D'}[x])= f(N^+_{D}[x])$ for every $x\in V\setminus \{u,v\}.$ If $f(N^+_{D}[u])\neq1,$ then $f$ is a MODF of $D'.$ If $f(N^+_{D}[u])=1,$ there exists $z\in N^+_{D}[u]\setminus \{v\}$ with $f(z)=-1.$ Then  the function $g:V\rightarrow \{-1,1\}$ such that $g(z)=1$ and $g(x)=f(x)$ for every $x\in V\setminus \{z\}$ is a MODF of $D'$ with $w(g)=w(f)+2.$

If $f(u)=1$ and $f(v)=-1,$ then $f(N^+_{D'}[u])= f(N^+_{D}[u])+1,$ and $f(N^+_{D'}[v])= f(N^+_{D}[v])+1,$ while $f(N^+_{D'}[x])= f(N^+_{D}[x])$ for every $x\in V\setminus \{u,v\}.$ Therefore,  $f$ is a MODF of $D'.$

If $f(u)=-1$ and $f(v)=+1,$ then $f(N^+_{D'}[u])= f(N^+_{D}[u])-1,$ and $f(N^+_{D'}[v])= f(N^+_{D}[v])-1,$ while $f(N^+_{D'}[x])= f(N^+_{D}[x])$ for every $x\in V\setminus \{u,v\}.$ However, if we consider  the function $g:V\rightarrow \{-1,1\}$ such that $g(u)=1$ and $g(x)=f(x)$ for every $x\in V\setminus \{u\},$ we have that $g(N^+_{D'}[u])= f(N^+_{D}[u])+1,$ and $g(N^+_{D'}[v])= f(N^+_{D}[v])+1,$ while $g(N^+_{D'}[x])\geq f(N^+_{D}[x])$ for every $x\in V\setminus \{u,v\}.$ Then $g$ is a MODF of $D'$ with $w(g)=w(f)+2.$ This settles the upper bound.

The lower bound follows because $D$ s obtained from $D'$ by reversing the arc $vu$. Hence $\gamma^+_{maj}(D)-2\leq \gamma^+_{maj}(D')\leq\gamma^+_{maj}(D)+2.$

Now, if $C_3$ denotes a directed triangle and $D$ is obtained from $C_3$ by reversing one arc, we have that $\gamma^+_{maj}(C_3)=3$ and $\gamma^+_{maj}(D)=1.$ Therefore, the bounds are sharp.
\end{proof}

\begin{theorem}
Let $D=(V,A)$ be a digraph with $uv\in A.$ Then $\gamma^+_{maj}(D)-2\leq \gamma^+_{maj}(D-uv)\leq\gamma^+_{maj}(D)+2.$ The bounds are sharp.
\end{theorem}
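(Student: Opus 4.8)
The plan is to exploit the observation that deleting the arc $uv$ changes only the closed out-neighborhood of $u$: indeed $N^+_{D-uv}[u]=N^+_D[u]\setminus\{v\}$, while $N^+_{D-uv}[x]=N^+_D[x]$ for every $x\in V\setminus\{u\}$. Consequently, for any $f:V\to\{-1,1\}$ one has $f(N^+_{D-uv}[u])=f(N^+_D[u])-f(v)$ and $f(N^+_{D-uv}[x])=f(N^+_D[x])$ for $x\neq u$. Both inequalities can then be proved by taking an optimal function on one side and transferring it to the other, exactly in the spirit of the proof of Theorem \ref{prop 31}.

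For the upper bound I would start from a $\gamma^+_{maj}(D)$-function $f$. If $f(v)=-1$, then $f(N^+_{D-uv}[u])\geq f(N^+_D[u])$ and all other sums are unchanged, so $f$ is already a MODF of $D-uv$ and $\gamma^+_{maj}(D-uv)\leq w(f)$. If $f(v)=1$, the only sum that can drop is the one at $u$, and it falls below $1$ precisely when $f(N^+_D[u])=1$; in every other case the membership of $u$ in $S$ is unaffected and $f$ remains a MODF of $D-uv$. In the critical case $f(N^+_D[u])=1$ and $f(v)=1$ we get $f(N^+_{D-uv}[u])=0$, and since $u\in N^+_{D-uv}[u]$ this set is nonempty with zero sum, so it contains a vertex $z$ with $f(z)=-1$. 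Re-setting $g(z)=1$ and $g=f$ elsewhere raises every closed out-neighborhood sum (flipping a value from $-1$ to $1$ is monotone), restores $g(N^+_{D-uv}[u])=2$, and costs only $w(g)=w(f)+2$. Hence $\gamma^+_{maj}(D-uv)\leq\gamma^+_{maj}(D)+2$.

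For the lower bound I would run the same argument in reverse, noting that $D$ is obtained from $D-uv$ by adding the arc $uv$, so that starting from a $\gamma^+_{maj}(D-uv)$-function $h$ gives $h(N^+_D[u])=h(N^+_{D-uv}[u])+h(v)$. If $h(v)=1$ then $h$ is already a MODF of $D$; if $h(v)=-1$ the sum at $u$ drops by one, and the only problematic case is $h(N^+_{D-uv}[u])=1$. There I would flip $v$ itself from $-1$ to $1$: this is monotone, makes $h(N^+_D[u])=0+2=2$, and yields a MODF of $D$ of weight $w(h)+2$. This gives $\gamma^+_{maj}(D)\leq\gamma^+_{maj}(D-uv)+2$, and combining the two yields the claimed chain of inequalities.

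The one genuinely delicate point in each direction is verifying that the single sign flip used to repair the sum at $u$ does not push any other vertex out of $S$; this is exactly where the monotonicity of flipping a value from $-1$ to $1$ is essential, since it guarantees $S_f^{D}\subseteq S_g^{D-uv}$ (respectively $S_h^{D-uv}\subseteq S_g^{D}$), so that the cardinality bound $|S|\geq \frac{n}{2}$ is preserved. For sharpness I would reuse the directed triangle: deleting one arc of $C_3$ produces $P_3$, whence $\gamma^+_{maj}(C_3-uv)=\gamma^+_{maj}(P_3)=1=\gamma^+_{maj}(C_3)-2$ by Propositions \ref{prop34} and \ref{prop333}, settling the lower bound; and taking $D$ to be $C_3$ together with one additional arc antiparallel to an existing one (deleting that extra arc $uv$), a direct check gives $\gamma^+_{maj}(D)=1$ while $\gamma^+_{maj}(D-uv)=\gamma^+_{maj}(C_3)=3$, settling the upper bound.
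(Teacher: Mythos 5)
Your proof is correct, and for both inequalities it is essentially the paper's argument: deleting (or re-adding) the arc $uv$ changes only the sum at $u$, the only problematic situation is a sum equal to $1$ dropping to $0$, and a single monotone flip of some $-1$ vertex (a vertex $z\in N^+_{D-uv}[u]$ for the upper bound, the vertex $v$ itself for the lower bound) repairs the set $S$ at a cost of $2$ in weight. Your lower bound is phrased directly by cases on $h(v)$ rather than via the paper's contradiction setup; this is if anything cleaner, since the paper's version states the key inequality backwards ($g(N^+_{D-uv}[u])<g(N^+_{D}[u])$ where the intended deduction requires the reverse) and writes $V\setminus\{u\}$ where $V\setminus\{v\}$ is meant, slips your formulation avoids. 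The only genuine divergence is the witness for sharpness of the upper bound: the paper orients the star $K_{1,4}$ so that the center $u$ has $d^-(u)=d^+(u)=2$, giving $\gamma^+_{maj}(D)=-1$ and $\gamma^+_{maj}(D-uv)=1$, whereas you take $C_3$ with one added antiparallel arc, so that deleting that arc returns $C_3$ and $\gamma^+_{maj}$ jumps from $1$ to $3$. Your example is legitimate under the paper's conventions (pairs of opposite arcs are explicitly allowed) and is arguably tidier because it reuses Proposition \ref{prop34}; the paper's example has the small merit of staying within oriented graphs, i.e., digraphs with no pairs of opposite arcs, so it shows the bound is sharp even in that smaller class.
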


\begin{proof}
The upper bound  follows in a similar way to that of Proposition \ref{prop 31}. The same function works in each case (although in the first case $g$ is not needed, since $f$ is already a MODF of $D'$). To show that the bound is sharp, consider the orientation $D=(V,A)$ of the star $K_{1,4}$ such that $d^-(u)=d^+(u)=2$ for the central vertex $u.$ It is easy to verify that $\gamma^+_{maj}(D)=-1.$ For any $v\in N^+(u),$ the digraph $D-uv$ is isomorphic to the orientation of the star $K_{1,3}$ such that $d^-(u)= 2$ and $d^+(u)=1,$ plus an isolated vertex. Notice that for any function $g:V\rightarrow \{-1,1\},$ if $g(u)=-1$ then $g(N^+[x])\leq 0$ for every $x\in N^-[u],$ which implies that $g$ is not a MODF of $D-uv.$ Take a $\gamma^+_{maj}(D-uv)$-function $f$. Since $f(u)=1,$ it follows that for every $x\in V\setminus\{u\},$ $f(N^+[x])>0$ if, and only if, $f(x)=1.$ Therefore, $\gamma^+_{maj}(D-uv)=1.$

For the lower bound, assume $\gamma^+_{maj}(D-uv)<\gamma^+_{maj}(D)$ and take a $\gamma^+_{maj}(D-uv)$-function $g.$ Notice that $g(N^+_{D-uv}[x])=g(N^+_{D}[x])$ for every $x\in V\setminus \{u\}.$ Then $g(N^+_{D-uv}[u])<g(N^+_{D}[u]),$ so $g(v)=-1.$ Since the function $f:V\rightarrow \{-1,1\}$ such that $f(v)=1$ and $f(x)=g(x)$ for every $x\in V\setminus \{u\}$ is a MODF of $D,$ it follows that $\gamma^+_{maj}(D-uv)=w(g)= w(f)-2 \geq \gamma^+_{maj}(D)-2.$  As in the previous proposition, if $C_3$ denotes a directed triangle and $D$ is obtained from $C_3$ by deleting one arc, we have that $\gamma^+_{maj}(C_3)=3$ and $\gamma^+_{maj}(D)=1,$ so the bound is sharp.
\end{proof}

\begin{prop} \label{prop35}
Let $D=(V,A)$ be a digraph and take $v\in V$ with $d^{+}(v)=0$. Then $\gamma^+_{maj}(D)-1\leq \gamma^+_{maj}(D-v).$
\end{prop}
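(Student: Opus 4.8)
The plan is to prove the equivalent statement $\gamma^+_{maj}(D) \leq \gamma^+_{maj}(D-v) + 1$ by starting from an optimal MODF of $D-v$ and extending it to a MODF of $D$ at an extra weight cost of $1$. The guiding structural observation is that, because $d^+(v)=0$, we have $N^+_D[v]=\{v\}$, and deleting $v$ changes the closed out-neighborhood of a vertex $x\neq v$ only when $xv\in A$: in that case $N^+_{D-v}[x]=N^+_D[x]\setminus\{v\}$, while for every other $x\neq v$ we have $N^+_{D-v}[x]=N^+_D[x]$. In particular, removing the sink $v$ can only delete the single element $v$ from some out-neighborhoods; it never enlarges any of them.

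Concretely, I would take a $\gamma^+_{maj}(D-v)$-function $g$ and define $f:V\rightarrow\{-1,1\}$ by $f(v)=1$ and $f(x)=g(x)$ for every $x\in V\setminus\{v\}$, so that $w(f)=w(g)+1$. I then verify that $f$ is a MODF of $D$ by tracking its satisfying set $S_f$. First, $f(N^+_D[v])=f(v)=1$, so $v\in S_f$. Second, for every $x\neq v$ with $g(N^+_{D-v}[x])\geq 1$ I would check that $x\in S_f$ as well: if $xv\notin A$ then $f(N^+_D[x])=g(N^+_{D-v}[x])\geq 1$, while if $xv\in A$ then $f(N^+_D[x])=g(N^+_{D-v}[x])+f(v)\geq 2$. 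Hence every vertex counted by $g$ in $D-v$ is still counted by $f$ in $D$, and $v$ is counted in addition.

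Writing $n=|V|$ and using $|S_g|\geq\frac{n-1}{2}$ (as $g$ is a MODF of the $(n-1)$-vertex digraph $D-v$), these two facts give $|S_f|\geq |S_g|+1\geq\frac{n-1}{2}+1=\frac{n+1}{2}\geq\frac{n}{2}$, so $f$ is indeed a MODF of $D$ and $\gamma^+_{maj}(D)\leq w(f)=\gamma^+_{maj}(D-v)+1$, which is the claimed inequality. I do not expect a serious obstacle here; the only point that genuinely needs care is the monotone direction of the change, namely that reinstating the sink $v$ with value $+1$ never decreases any partial sum $f(N^+_D[x])$ while creating exactly one new satisfying vertex. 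Since the statement claims only a one-sided bound and no sharpness, no extremal example is required.
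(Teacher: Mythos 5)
Your proof is correct and follows essentially the same approach as the paper: the paper also takes a $\gamma^+_{maj}(D-v)$-function $g$, extends it to $D$ by assigning $+1$ to the sink $v$, and concludes $\gamma^+_{maj}(D)\leq w(g)+1$. The only difference is that the paper simply asserts the extension is a MODF of $D$, whereas you verify this claim in detail (that $v$ itself is satisfied since $N^+_D[v]=\{v\}$, that no other vertex's sum can decrease, and the count $|S_f|\geq\frac{n-1}{2}+1\geq\frac{n}{2}$).
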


\begin{proof}
Take a  $\gamma^+_{maj}(D-v)$-function $g.$ Since the function $g':V\rightarrow \{-1,1\}$ such that $g'(v)=1$ and $g'(x)=g(x)$ for every $x\in V\setminus \{v\}$ is a MODF of $D,$ we have $w(g)=w(g')-1\geq \gamma^+_{maj}(D)-1.$
\end{proof}

In relation with Proposition \ref{prop35}, notice that the result does not necessarily hold if $d^{+}(v)>0$. For example, for the digraph $D=(V,A)$ shown in Figure 1 we have $\gamma^+_{maj}(D)=1$, while $\gamma^+_{maj}(D-v)=-2$. Following this idea it is easy to construct examples in which the difference is any positive integer.

\begin{center}
\unitlength 1mm 
\linethickness{0.4pt}
\ifx\plotpoint\undefined\newsavebox{\plotpoint}\fi 
\begin{picture}(75.75,23.75)(0,0)
\put(2.25,2.75){\circle*{2}}
\put(1.5,21.25){\circle*{2}}
\put(19.25,10.25){\circle*{2}}
\put(36.25,10.75){\circle*{2}}
\put(54.75,10.75){\circle*{2}}
\put(74.75,.75){\circle*{2}}
\put(74.75,22.75){\circle*{2}}
\put(64.75,16.875){\vector(3,2){.07}}\multiput(54.75,11)(.0573065903,.0336676218){349}{\line(1,0){.0573065903}}
\put(64.375,6.125){\vector(-2,1){.07}}\multiput(74.25,1)(-.0649671053,.0337171053){304}{\line(-1,0){.0649671053}}
\put(75,12.25){\vector(0,-1){.07}}\put(75,22.25){\line(0,-1){20}}
\put(45.5,10.75){\vector(1,0){.07}}\put(36.75,10.75){\line(1,0){17.5}}
\put(27.375,10.5){\vector(-1,0){.07}}\put(35.75,10.5){\line(-1,0){16.75}}
\put(9.875,16.25){\vector(-3,2){.07}}\multiput(18.5,11)(-.0552884615,.0336538462){312}{\line(-1,0){.0552884615}}
\put(10.5,6.5){\vector(-3,-1){.07}}\multiput(19.25,10)(-.084134615,-.033653846){208}{\line(-1,0){.084134615}}
\put(36,7.5){\makebox(0,0)[cc]{$v$}}
\end{picture}

Figure 1
\end{center}

In a similar way, the removal of a vertex may increase $\gamma^+_{maj}$ as much as desired. For example, consider the digraph $D=(V,A)$ shown in Figure 2, where $V=\{u,v\}\cup S\cup T,$ $|S|=k,$ $|T|=k+2,$ $d^-(x)=0$ for every $x\in S\cup T,$ $d^+(u)=d^+(v)=0,$ $N^-(u)=S,$ and $N^-(v)=S\cup T.$ Then the function $f:V\rightarrow \{-1,1\}$ such that $f(u)=f(v)=1$ and $f(x)=-1$ for every $x\in S\cup T$ is a MODF of $D$ with $w(f)= -2k.$ However, $D-u$ is a star in which the central vertex $v$ is the head of every arc, so $\gamma^+_{maj}(D-u)\geq0.$

\begin{center}
\unitlength 1mm 
\linethickness{0.4pt}
\ifx\plotpoint\undefined\newsavebox{\plotpoint}\fi 
\begin{picture}(68.75,45.25)(0,0)
\put(6.25,24.25){\circle*{2}}
\put(46,23.75){\circle*{2}}
\put(66.25,4){\circle*{2}}
\put(25.75,43.25){\circle*{2}}
\put(66.25,13.75){\circle*{2}}
\put(66.25,35.75){\circle*{2}}
\put(26.25,34){\circle*{2}}
\put(66.25,43){\circle*{2}}
\put(26,29.25){\circle*{1}}
\put(26,26){\circle*{1}}
\put(26,22.75){\circle*{1}}
\put(26,18.75){\circle*{1}}
\put(66,31.25){\circle*{1}}
\put(66,27){\circle*{1.12}}
\put(66,22.75){\circle*{1}}
\put(66,18.5){\circle*{1}}
\put(25.75,13.75){\circle*{2}}
\put(15.875,33.625){\vector(-1,-1){.07}}\multiput(25.5,43.25)(-.0337127846,-.0337127846){571}{\line(0,-1){.0337127846}}
\put(16.125,29){\vector(-2,-1){.07}}\multiput(26,34)(-.0664983165,-.0336700337){297}{\line(-1,0){.0664983165}}
\put(15.625,19){\vector(-2,1){.07}}\multiput(25.25,13.75)(-.0616987179,.0336538462){312}{\line(-1,0){.0616987179}}
\put(35.875,33.375){\vector(1,-1){.07}}\multiput(25.75,43.25)(.034556314,-.0337030717){586}{\line(1,0){.034556314}}
\put(36,28.75){\vector(2,-1){.07}}\multiput(26.25,34)(.0625,-.0336538462){312}{\line(1,0){.0625}}
\put(35.625,18.625){\vector(2,1){.07}}\multiput(25.75,13.5)(.0649671053,.0337171053){304}{\line(1,0){.0649671053}}
\put(56,33.375){\vector(-1,-1){.07}}\multiput(66,42.75)(-.035971223,-.0337230216){556}{\line(-1,0){.035971223}}
\put(56.25,29.625){\vector(-3,-2){.07}}\multiput(66.25,35.5)(-.0573065903,-.0336676218){349}{\line(-1,0){.0573065903}}
\put(55.875,19.125){\vector(-2,1){.07}}\multiput(65.75,14)(-.0649671053,.0337171053){304}{\line(-1,0){.0649671053}}
\put(55.75,14.125){\vector(-1,1){.07}}\multiput(66,4.25)(-.0349829352,.0337030717){586}{\line(-1,0){.0349829352}}
\put(5.5,20.25){$u$}
\put(25.5,45.25){$s_1$}
\put(25.5,35.75){$s_2$}
\put(25.5,10.5){$s_k$}
\put(45,20){$v$}
\put(68.25,43.25){$t_1$}
\put(68.5,35.5){$t_2$}
\put(68.75,14){$t_{k+1}$}
\put(68.5,3.5){$t_{k+2}$}
\end{picture}

Figure 2
\end{center}

\section{Oriented graphs}

Let $G=(V,E)$ be a graph. An orientation of $G$ is a digraph $D=(V,A)$ such that $uv\in E \Leftrightarrow (uv\in A$ or $vu\in A),$ and $|E|=|A|.$ Of course, two  distinct orientations of a given graph may have different majority domination numbers. This suggests the following definitions:

\begin{dfn}
Let $G$ be a graph. Then $dom^+_{maj}(G)=\min\{\gamma^+_{maj} (D): D \ \text{is an orientation of} \ G\}$ and $DOM^+_{maj}(G)=\max\{\gamma^+_{maj} (D): D \ \text{is an orientation of} \ G\}.$
  \end{dfn}

  The study of these two parameters is quite interesting. In contrast with what happens with majority out-dominating sets (see \cite{mase}), in this case it does not hold that for every graph $G,$ $dom^+_{maj}(G)=\gamma_{maj} (G).$

\begin{prop} \label{prop 334}
Let $P_n$ denote the (undirected) path with $n$ vertices. Then $dom^+_{maj}(P_n) =-n+2\lceil \frac{n+2}4 \rceil$ and
\[DOM^+_{maj}(P_n) = \left\lbrace
  \begin{array}{c l}
  0  & \text{if  $  n$ is even }\\
    1  & \text{if  $  n$ is odd }
  \end{array}
\right. \]
\end{prop}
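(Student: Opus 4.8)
The plan is to handle the two parameters separately, reducing both to a count on orientations $D$ of $P_n$. Throughout, if $f$ is a majority out-dominating function with $k=|f^{-1}(1)|$ ones, then $w(f)=2k-n$, and being a MODF means the set $S=\{v:f(N^+[v])\ge 1\}$ satisfies $|S|\ge n/2$. So $dom^+_{maj}$ amounts to finding an orientation in which $k$ can be pushed as low as $\lceil\frac{n+2}{4}\rceil$, while $DOM^+_{maj}$ amounts to showing that in \emph{every} orientation some MODF uses at most $\lceil n/2\rceil$ ones.

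For the value $dom^+_{maj}(P_n)=-n+2\lceil\frac{n+2}{4}\rceil$, the upper bound is an explicit economical orientation. I would take an alternating ``run'' $u_1,\dots,u_\ell$ with $\ell$ the least odd integer $\ge\lceil n/2\rceil$ (this fits since $\ell\le\lceil n/2\rceil+1\le n$), making the odd-indexed $u_{2j-1}$ sinks and the even-indexed $u_{2j}$ sources; I orient the edges joining the run to the rest so they point into the end sinks $u_1,u_\ell$, and orient the remaining edges arbitrarily. Setting $f=+1$ on $u_1,u_3,\dots,u_\ell$ and $f=-1$ elsewhere, each sink $u_{2j-1}$ lies in $S$ (its closed out-neighborhood is itself, valued $+1$), and each source $u_{2j}$ lies in $S$ because $f(N^+[u_{2j}])=-1+1+1=1$. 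Thus $|S|\ge\ell\ge n/2$ with exactly $(\ell+1)/2$ ones, and a short case check on $n\bmod 4$ gives $(\ell+1)/2=\lceil\frac{n+2}{4}\rceil$. This reproduces the transitive-tournament value of Theorem \ref{thm 35}.

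The matching lower bound is the heart of this part, and I would isolate it as the following lemma: for every orientation of $P_n$ and every $f$ with $k\ge 1$ ones, $|S|\le 2k-1$. I prove it by induction on $k$, resetting the rightmost $+1$ vertex $v_p$ to $-1$ to obtain $f'$ with $k-1$ ones. Since $f'\le f$ pointwise we have $S'\subseteq S$, and a vertex can leave $S$ only if it lies in $N^-[v_p]\subseteq\{v_{p-1},v_p,v_{p+1}\}$; the key point is that $v_{p+1}$ cannot be in $S$, since $v_p$ is the last $+1$, so both $v_{p+1}$ and $v_{p+2}$ carry $-1$ and $f(N^+[v_{p+1}])\le 0$. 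Hence at most the two vertices $v_{p-1},v_p$ leave $S$, giving $|S|\le|S'|+2\le 2k-1$. A MODF forces $|S|\ge n/2$ (so $k\ge 1$), hence $2k-1\ge n/2$, i.e. $k\ge\lceil\frac{n+2}{4}\rceil$, and therefore $\gamma^+_{maj}(D)\ge -n+2\lceil\frac{n+2}{4}\rceil$ for every orientation.

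For $DOM^+_{maj}(P_n)$ the lower bound is immediate: $\overrightarrow{P_n}$ is one orientation and Proposition \ref{prop333} gives $\gamma^+_{maj}(\overrightarrow{P_n})=0$ for even $n$ and $1$ for odd $n$. The upper bound, that \emph{every} orientation admits a MODF with at most $\lceil n/2\rceil$ ones, is where I expect the main obstacle. The difficulty is that the tempting global colorings (by vertex parity, or by parity of distance to the nearest sink) fail exactly on $\overrightarrow{P_n}$, leaving $|S|$ well below $n/2$; any valid assignment must follow the flow of the arcs rather than the vertex indices. My proposed route is induction deleting the last two vertices $v_{n-1},v_n$ and extending a MODF of $P_{n-2}$ by coloring $v_{n-1},v_n$ with at most one extra $+1$ and gaining at least one new satisfied vertex. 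The delicate configuration is the attachment edge oriented $v_{n-2}\to v_{n-1}$ (i.e. $v_{n-2}\to v_{n-1}\to v_n$ for even $n$): there, setting $f(v_{n-1})=-1$ can knock $v_{n-2}$ out of $S$ and cancel the gain. To close the argument I would strengthen the inductive hypothesis to control the boundary vertex — producing a MODF in which the current last vertex is not ``barely'' in $S$ (its closed out-neighborhood sum is at least $2$, or it is already outside $S$) — so that the single decrement from attaching $v_{n-1}$ cannot shrink $|S|$. Showing this strengthened hypothesis is maintainable across all boundary configurations is the crux.
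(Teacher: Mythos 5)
The genuine gap is the upper bound for $DOM^+_{maj}(P_n)$: you state a plan (induction on $n$ deleting the last two vertices, with a strengthened boundary hypothesis) but explicitly leave its crux unresolved, so the proposal does not establish $DOM^+_{maj}(P_n)\le 0$ (resp.\ $\le 1$), which is half of the statement. In fact no induction is needed; the paper closes this part with a one-shot construction governed by the middle of the path. For $n$ even, look at the edge between $v_{n/2}$ and $v_{n/2+1}$: if it is oriented $v_{n/2}\to v_{n/2+1}$, set $f=+1$ exactly on $\{v_{n/2+1},\dots,v_n\}$ and $-1$ elsewhere. This half is \emph{out-closed}: $v_{n/2+1}$ has no out-neighbor to its left (the middle edge points into the half), and every other vertex of the half has both of its neighbors inside the half, so all $n/2$ of its vertices satisfy $f(N^+[v])\ge 1$, giving a MODF of weight $0$; if the middle edge is oriented the other way, take the left half instead. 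For $n$ odd, argue on $m=\lceil n/2\rceil$: if $d^-(v_m)\in\{0,2\}$, set $f=+1$ on $\{v_m,\dots,v_n\}$ (when $d^-(v_m)=0$ one checks $f(N^+[v_m])=-1+1+1=1$; when $d^-(v_m)=2$, $N^+[v_m]=\{v_m\}$); if $d^-(v_m)=1$, take the $\lceil n/2\rceil$ vertices on the side of the out-going edge of $v_m$, including $v_m$. In every case the chosen $\lceil n/2\rceil$ vertices all lie in $S$, so the weight is $\le 0$ or $\le 1$ according to parity. This choice ``follows the flow of the arcs'' in exactly the sense you ask for, and it dissolves the delicate boundary configuration on which your induction stalls.

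Your treatment of $dom^+_{maj}(P_n)$, by contrast, is correct, and your key lemma (in every orientation, any $f$ with $k\ge 1$ ones satisfies $|S|\le 2k-1$) is proved by a genuinely different route than the paper's: the paper argues directly that a vertex $v$ with $f(v)=-1$ lies in $S$ only if $d^+(v)=2$ and both out-neighbors carry $+1$, and at most $k-1$ such ``valleys'' fit between $k$ ones on a path, whereas you induct on $k$ by flipping the rightmost $+1$. Your induction step is sound, but you must state the base case: the induction has to be grounded at $k=1$, where $|S|\le 1$ holds because a $-1$ vertex would need two $+1$ out-neighbors and the unique $+1$ vertex lies in $S$ only if it is a sink. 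Grounding at $k=0$ (where $S=\emptyset$) yields only $|S|\le 2k$, hence only $k\ge\lceil n/4\rceil$, which is strictly weaker than the required $k\ge\lceil\frac{n+2}{4}\rceil$ (for $n=8$ it gives $k\ge 2$ instead of $k\ge 3$). Your alternating-run construction for the matching orientation is fine and is essentially the paper's orientation $D_1$.
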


\begin{proof}
For any orientation $D$ of $P_n$ and any vertex $v\in V(P_n)=V(D),$ $d^-_D(v)+d^+_D(v)\leq 2.$ Given any MODF $f$ of $D$, a vertex $v\in V(D)$ with $f(v)=-1$ will satisfy $f(N^+[v])\geq1$ if, and only if, $d^+(v)=2$ and $N^+(v)\subseteq f^{-1}(1).$  Therefore, if $|f^{-1}(1)|=k$ then  a maximum of $2k-1$ vertices $x$ satisfy $f(N^+[x])\geq1.$ Since $f$ is a MODF of $D,$ it follows that $2k-1\geq \lceil \frac{n}2 \rceil,$ which implies $k \geq \lceil \frac{n+2}4 \rceil,$ that is, $dom^+_{maj}(P_n) \geq -n+2\lceil \frac{n+2}4 \rceil.$

 On the other hand, consider the following orientation $D_1=(V,A)$ of $P_n$: We number the vertices of $V(P_n)$ in order, that is, $V(P_n)=\{v_1,...,v_n\},$ with $N(v_i)=\{v_{i-1},v_{i+1}\}$ for $i\in\{2,...,n-1\},$ $N(v_1)=\{v_2\},$ and $N(v_n)=\{v_{n-1}\}$; we orient the edges of $P_n$ in such a way that for a vertex $v_i\in V,$ $d^+(v_i)=0$ if, and only if, $i$ is even. Then the function $g:V\rightarrow\{-1,1\}$ such that $g(v_i)=1$ if $i$ is even and $i\leq \lceil \frac{n}2 \rceil+2,$ and $g(v_i)=-1$ otherwise, is a MODF of $D_1$ with $w(g)=-n+2\lceil \frac{n+2}4 \rceil.$

 For $DOM^+_{maj}(P_n),$ number the vertices of $V$ as in the previous paragraph and consider any orientation $D=(V,A)$  of $P_n:$

If $n$ is even and $v_{\frac{n}2}v_{\frac{n}2+1}\in A,$ the function $f:V\rightarrow\{-1,1\}$ such that $f(v_i)=1$ if  $i\geq  \frac{n}2 +1$ and $f(v_i)=-1$ otherwise is a MODF of $D$ with $w(f)=0.$

If $n$ is even and $v_{\frac{n}2+1}v_{\frac{n}2}\in A,$ the function $f:V\rightarrow\{-1,1\}$ such that $f(v_i)=1$ if  $i\leq \frac{n}2 $ and $f(v_i)=-1$ otherwise is a MODF of $D$ with $w(f)=0.$

If $n$ is odd and $d^-(v_{\lceil \frac{n}2 \rceil})=0$ or $d^-(v_{\lceil \frac{n}2 \rceil})=2,$ the function $f:V\rightarrow\{-1,1\}$ such that $f(v_i)=1$ if  $i\geq \lceil \frac{n}2 \rceil$ and $f(v_i)=-1$ otherwise is a MODF of $D$ with $w(f)=1.$

If $n$ is odd, $d^-(v_{\lceil \frac{n}2 \rceil})=1,$ and $v_{\lfloor \frac{n}2 \rfloor}v_{\lceil \frac{n}2 \rceil}\in A,$  the function $f:V\rightarrow\{-1,1\}$ such that $f(v_i)=1$ if  $i\geq \lceil \frac{n}2 \rceil$ and $f(v_i)=-1$ otherwise is a MODF of $D$ with $w(f)=1.$

If $n$ is odd, $d^-(v_{\lceil \frac{n}2 \rceil})=1,$ and $v_{\lceil \frac{n}2 \rceil}v_{\lfloor \frac{n}2 \rfloor}\in A,$  the function $f:V\rightarrow\{-1,1\}$ such that $f(v_i)=1$ if  $i\leq \lceil \frac{n}2 \rceil$ and $f(v_i)=-1$ otherwise is a MODF of $D$ with $w(f)=1.$

Therefore, $DOM^+_{maj}(P_n) \leq 0$ if $n$ is even, and $DOM^+_{maj}(P_n) \leq 1$ if $n$ is odd. Equality holds for directed paths, as shown in Proposition \ref{prop333}.
\end{proof}

\begin{prop} \label{prop 335}
Let $C_n$ denote the (undirected) cycle with $n$ vertices. Then  $dom^+_{maj}(C_n) =-n+2\lceil \frac{n+2}4 \rceil$ and
 \[DOM^+_{maj}(C_n) = \left\lbrace
  \begin{array}{c l}
  2  & \text{if  $  n$ is even }\\
    3  & \text{if  $  n$ is odd }
  \end{array}
\right. \]
\end{prop}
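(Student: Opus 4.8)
The plan is to handle the two parameters by quite different means, and I expect $dom^+_{maj}$ to be the real work while $DOM^+_{maj}$ falls out of results already in the paper. Throughout I write $k=|f^{-1}(1)|$, so that $w(f)=2k-n$, and for a fixed orientation $D$ and function $f$ I call a vertex $v$ \emph{good} when $f(N^+[v])\ge 1$; recall that $f$ is a MODF precisely when the number of good vertices is at least $\lceil n/2\rceil$. The governing fact on a cycle is that every vertex has out-degree at most $2$, so that a vertex $v$ with $f(v)=-1$ is good if and only if $d^+(v)=2$ and both out-neighbours lie in $f^{-1}(1)$ (I call such $v$ a \emph{good source}), while the number of good vertices with $f(v)=1$ is trivially at most $k$.

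For $dom^+_{maj}(C_n)$ I would first establish the lower bound by a counting lemma. Counting the arcs that enter $f^{-1}(1)$, each good source sends both of its out-arcs into $f^{-1}(1)$, while each positive vertex $u$ absorbs at most $d^-(u)\le 2$ such arcs; hence the number of good sources is at most $\tfrac12\sum_{u\in f^{-1}(1)}d^-(u)\le k$, and so the number of good vertices is at most $2k$. The delicate point is that equality $2k$ forces every positive vertex to be a sink whose two in-arcs both come from good sources, and a short closure argument around the cycle then shows that positive sinks and good sources must alternate all the way around, i.e. $n=2k$. Consequently, whenever $k<n/2$ there are at most $2k-1$ good vertices. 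Since $k\le\lceil\frac{n+2}4\rceil-1$ gives $k<n/2$, such an $f$ would have at most $2\lceil\frac{n+2}4\rceil-3<\lceil n/2\rceil$ good vertices, contradicting that it is a MODF; thus $k\ge\lceil\frac{n+2}4\rceil$ and $w(f)\ge -n+2\lceil\frac{n+2}4\rceil$ for every orientation, so $dom^+_{maj}(C_n)\ge -n+2\lceil\frac{n+2}4\rceil$. For the matching upper bound I would exhibit one orientation attaining it: for even $n$, take the alternating orientation in which even-indexed vertices are sinks and odd-indexed vertices are sources, set $f=1$ on a block of $\lceil\frac{n+2}4\rceil$ consecutive sinks and $f=-1$ elsewhere. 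Each chosen sink is good and each source squeezed between two consecutive chosen sinks is a good source, producing at least $2\lceil\frac{n+2}4\rceil-1\ge\lceil n/2\rceil$ good vertices, so $f$ is a MODF of weight $-n+2\lceil\frac{n+2}4\rceil$; for odd $n$ the same idea works after absorbing the single unavoidable defect of an odd cycle into a short directed segment, which does not disturb the count.

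The value of $DOM^+_{maj}(C_n)$ is then immediate from earlier results. For the lower bound, the consistently oriented directed cycle is an orientation of $C_n$, and by Proposition \ref{prop34} its majority out-domination number is $2$ for even $n$ and $3$ for odd $n$, so $DOM^+_{maj}(C_n)\ge 2$ and $\ge 3$ respectively. For the upper bound, let $D$ be any orientation of $C_n$ and delete one arc $uv$; the underlying graph of $D-uv$ is the path on $n$ vertices, so $D-uv$ is an orientation of $P_n$ and hence $\gamma^+_{maj}(D-uv)\le DOM^+_{maj}(P_n)$, which equals $0$ for even $n$ and $1$ for odd $n$ by Proposition \ref{prop 334}. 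The theorem bounding the effect of deleting an arc gives $\gamma^+_{maj}(D)\le\gamma^+_{maj}(D-uv)+2$, so $\gamma^+_{maj}(D)\le 2$ (even) and $\le 3$ (odd) for every orientation $D$; maximizing over $D$ yields $DOM^+_{maj}(C_n)\le 2$ and $\le 3$, matching the lower bound.

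The main obstacle is exactly the tight counting in the $dom^+_{maj}$ lower bound: the straightforward arc-count gives only at most $2k$ good vertices, which is off by one and is genuinely attained (for example on $C_4$ oriented with two opposite sources and two opposite sinks). The crux is therefore to isolate the equality case and prove it occurs only for the perfect alternating orientation with $k=n/2$, so that it never intrudes in the relevant range $k\le\lceil\frac{n+2}4\rceil-1<n/2$; once that is pinned down, the rest is routine bookkeeping with $\lfloor\cdot\rfloor$ and $\lceil\cdot\rceil$.
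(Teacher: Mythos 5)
Your proof is correct, and in two places it takes a route worth distinguishing from the paper's. For the lower bound on $dom^+_{maj}(C_n)$, the paper merely says that the bound ``follows as in Proposition \ref{prop 334}'', i.e.\ it silently transfers the path estimate ``at most $2k-1$ good vertices'' to cycles; as you rightly observe, that estimate is false verbatim on cycles (your $C_4$ example with two opposite sources and two opposite sinks attains $2k$), and the path proof cannot be copied because it leans on the endpoints. Your equality analysis --- $2k$ good vertices forces all positive vertices to be sinks fed only by good sources, whence perfect alternation and $n=2k$, so that $k<n/2$ yields at most $2k-1$ good vertices --- is exactly the missing justification, and this makes your argument more rigorous than the paper's at the one delicate point of the proposition. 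Your upper-bound construction for $dom^+_{maj}$ (a block of $\lceil\frac{n+2}4\rceil$ consecutive positive sinks in the alternating orientation) is essentially the paper's, which instead reuses the orientation $D_1$ and function $g$ from Proposition \ref{prop 334} and closes the path with an arc joining the two negative end-vertices; note that your block needs $\lceil\frac{n+2}4\rceil$ sinks to exist, which fails for $C_3$, so like the paper (which checks $C_3$ and $C_4$ by hand) you should dispose of $n=3$ by direct inspection, and your one-sentence treatment of odd $n$ deserves the explicit orientation you sketched. For $DOM^+_{maj}(C_n)$ your route is genuinely different and cleaner: the paper goes back \emph{inside} the proof of Proposition \ref{prop 334}, extracts the MODFs built there (which have $|f^{-1}(1)|=\lceil\frac n2\rceil$ and a positive vertex among $\{v_1,v_n\}$), makes both end-vertices positive and re-verifies the MODF property on the cycle orientation, whereas you delete an arc and combine only the \emph{statements} of Proposition \ref{prop 334} and of the arc-deletion theorem $\gamma^+_{maj}(D)-2\leq\gamma^+_{maj}(D-uv)$; this black-box argument is shorter, avoids re-opening an earlier proof, and costs nothing since the paper proves that theorem anyway. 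Both your proof and the paper's obtain the matching lower bound for $DOM^+_{maj}$ from Proposition \ref{prop34} via the directed cycle.
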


\begin{proof}
As in the proof of Proposition \ref{prop 334}, for any orientation $D$ of $P_n$ and any vertex $v\in V(P_n)=V(D),$ $d^-_D(v)+d^+_D(v)\leq 2.$ Moreover, given any MODF $f$ of $D$, a vertex $v\in V(D)$ with $f(v)=-1$ will satisfy $f(N^+[v])\geq1$ if, and only if, $d^+(v)=2$ and $N^+(v)\subseteq f^{-1}(1).$ It follows that for every MODF $f$ of $D$, $f^{-1}(1)\geq \lceil \frac{n+2}4 \rceil.$

On the other hand, it is easy to show that $dom^+_{maj}(C_3) = 1$ and $dom^+_{maj}(C_4) = 0.$ For $n\geq5,$ we observe that in the orientation $D_1$ and the function $g$ proposed in the proof of Proposition \ref{prop 334}, $\{v_1, v_n\}\subseteq g^{-1}(-1),$ which implies $g(N^+[v_1])\leq0$ and $g(N^+[v_n])\leq0.$ Therefore, $g$ is a MODF of $D_1+v_1v_n.$ This means that for any $\gamma^+_{maj}$-function $f$ of $D_1+v_1v_n,$ $f^{-1}(1)\leq \lceil \frac{n+2}4 \rceil.$ Since $D_1+v_1v_n$ is an orientation of $C_n,$ we have that  $dom^+_{maj}(C_n) =-n+2\lceil \frac{n+2}4 \rceil.$

For the value of $DOM^+_{maj}(C_n),$ we proceed in the following way: Number the vertices of $C_n$ in order, as in the proof of Proposition \ref{prop 334}, and consider any orientation $D$ of $C_n.$ Now take $D'=D-v_nv_1$ or $D'=D-v_1v_n,$ whichever applies, which is an orientation of the graph $C_n-v_nv_1,$ isomorphic to $P_n.$ According to Proposition \ref{prop 334}, there is a MODF $f$ of $D'$ with $|f^{-1}(1)|=\lceil \frac{n}2 \rceil,$ and such that either $f(v_1)=1$ or $f(v_n)=1.$  Then the function $g:V\rightarrow\{-1,1\}$ such that $g(v_1)=g(v_n)=1$ and $g(v_i)=f(v_i)$ for $i\in\{2,...,n-1\}$ is a MODF of $D,$ and $w(g)=\lceil \frac{n}2 \rceil+1.$ Therefore, $DOM^+_{maj}(C_n) \leq 2$ if $n$ is even, and $DOM^+_{maj}(C_n) \leq 3$ if $n$ is odd. Equality holds for directed cycles, as shown in Proposition \ref{prop34}.
\end{proof}

\begin{prop} \label{prop 336}
For the star $K_{1,n-1}$ we have:
\[DOM^+_{maj}(K_{1,n-1}) = \left\lbrace
  \begin{array}{c l}
  0  & \text{if  $  n$ is even }\\
    1  & \text{if  $  n$ is odd }
  \end{array}
\right. \]

Moreover, if $n\geq5$ then:
\[dom^+_{maj}(K_{1,n-1}) = \left\lbrace
  \begin{array}{c l}
  -2  & \text{if  $  n$ is even }\\
    -1  & \text{if  $  n$ is odd }
  \end{array}
\right. \]
\end{prop}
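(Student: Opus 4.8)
The plan is to parametrize all orientations of $K_{1,n-1}$ by a single integer. Write $c$ for the center, and call a leaf an \emph{out-leaf} or an \emph{in-leaf} according to whether the edge joining it to $c$ is oriented away from or towards $c$. Since the automorphism group of the star permutes the leaves freely, every orientation is determined up to isomorphism by $a=d^+(c)$, the number of out-leaves, with $b=n-1-a$ in-leaves; call this orientation $D_a$. For a function $f$ with $f(c)=\varepsilon\in\{-1,1\}$, let $p$ be the number of out-leaves and $q$ the number of in-leaves assigned $1$. Three local facts are immediate: an out-leaf lies in $S$ iff it is assigned $1$ (its closed out-neighbourhood is itself); an in-leaf lies in $S$ iff it is assigned $1$ \emph{and} $\varepsilon=1$ (its closed out-neighbourhood is itself together with $c$); and $c\in S$ iff $\varepsilon+2p-a\ge1$. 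The weight is $w(f)=\varepsilon+2(p+q)-(n-1)$. Both formulas then reduce to optimizing $w(f)$ over $(\varepsilon,p,q)$ subject to $|S|\ge\lceil n/2\rceil$, first for fixed $a$ and then over $a$.

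For $DOM^+_{maj}$ I would first prove the upper bound $\gamma^+_{maj}(D_a)\le 2\lceil n/2\rceil-n$ for every $a$ by exhibiting a cheap MODF. Taking $\varepsilon=1$, $p=\lceil a/2\rceil$ (which puts $c$ in $S$) and enough further $1$'s among the leaves to reach $p+q=\lceil n/2\rceil-1$ yields $|S|=\lceil n/2\rceil$ and $w(f)=2\lceil n/2\rceil-n$, which is $0$ for even $n$ and $1$ for odd $n$; this works whenever $\lceil a/2\rceil\le\lceil n/2\rceil-1$, i.e.\ for all $a$ when $n$ is odd and for all $a\le n-2$ when $n$ is even. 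The single remaining case, $n$ even and $a=n-1$, is handled with $\varepsilon=-1$, $q=0$, $p=n/2$: here $c\notin S$, but the $n/2$ positively-valued out-leaves already give $|S|=n/2$ and $w(f)=0$. The matching lower bound comes from the all-in orientation $D_0$, where $\varepsilon=-1$ forces $|S|=0$ and is infeasible, while $\varepsilon=1$ forces $q\ge\lceil n/2\rceil-1$ and hence $w(f)\ge 2\lceil n/2\rceil-n$. Together these give $DOM^+_{maj}=2\lceil n/2\rceil-n$, the stated $0$ and $1$.

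For $dom^+_{maj}$ I would first establish the global lower bound $w(f)\ge 2\lceil n/2\rceil-2-n$ in every orientation. The key observation is that in-leaves are useless for $S$ once $\varepsilon=-1$: if $\varepsilon=-1$ then $|S|\le p+1$, so $p\ge\lceil n/2\rceil-1$ and $w(f)\ge 2p-n\ge 2\lceil n/2\rceil-2-n$; if $\varepsilon=1$ then $|S|\le p+q+1$, forcing $p+q\ge\lceil n/2\rceil-1$ and the strictly larger bound $w(f)\ge 2\lceil n/2\rceil-n$. This gives $dom^+_{maj}\ge 2\lceil n/2\rceil-2-n$, namely $-2$ for even $n$ and $-1$ for odd $n$. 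For the reverse inequality I would produce one orientation meeting the bound: take $\varepsilon=-1$, $q=0$, $p=\lceil n/2\rceil-1$, and choose $a$ so that simultaneously $p\le a$ (so the assignment exists) and $p\ge\lceil a/2\rceil+1$ (so $c\in S$, whence $|S|=\lceil n/2\rceil$); the resulting MODF has weight exactly $2\lceil n/2\rceil-2-n$.

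The main obstacle is precisely the existence of such an $a$, and this is where $n\ge5$ is indispensable. The two constraints are $a\ge\lceil n/2\rceil-1$ and $\lceil a/2\rceil\le\lceil n/2\rceil-2$, which read $n/2-1\le a\le n-4$ for even $n$ and $(n-1)/2\le a\le n-3$ for odd $n$. These integer intervals are nonempty exactly when $n\ge6$ (even) and $n\ge5$ (odd), i.e.\ for $n\ge5$ in each parity; for smaller $n$ the interval collapses and the construction fails, which is exactly why the $dom^+_{maj}$ formula is asserted only for $n\ge5$ while the $DOM^+_{maj}$ computation needs no such restriction. The remaining work is routine bookkeeping: checking the boundary allocations $0\le q=s-p\le b$ and $p\le a$ to confirm that every exhibited $f$ is a genuine function into $\{-1,1\}$ and a MODF.
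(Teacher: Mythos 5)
Your proposal is correct, and at its core it follows the same strategy as the paper's proof: the same local observations (an out-leaf is satisfied iff it receives $+1$; an in-leaf is satisfied iff both it and the center receive $+1$; the center is satisfied iff $\varepsilon+2p-a\geq 1$), the same witness orientations (the all-in orientation $D_0$ for the lower bound on $DOM^+_{maj}$, and an orientation with roughly half the edges directed out of the center for the upper bound on $dom^+_{maj}$ --- the paper picks $a=p=\lceil\frac{n}{2}\rceil-1$, the left endpoint of your feasible interval $p\leq a\leq 2p-2$), and the same counting argument for the lower bound on $dom^+_{maj}$. However, your systematic parametrization by $(a,\varepsilon,p,q)$ buys genuine rigor at the corner cases, and this is not merely cosmetic: in the case $n$ even and $a=n-1$ (every edge oriented away from the center $v$), the paper's construction $f_1$ --- assigning $1$ to $v$ and to $\lceil\frac{n-2}{2}\rceil$ of its out-neighbors --- is \emph{not} a MODF, since then $f_1(N^+[v])=1+(\frac{n}{2}-1)-\frac{n}{2}=0$, so the center is unsatisfied and only $\frac{n}{2}-1<\frac{n}{2}$ vertices are satisfied; your separate treatment of exactly this case (take $\varepsilon=-1$, $p=\frac{n}{2}$, so the satisfied vertices are precisely the $\frac{n}{2}$ positive out-leaves) repairs that gap. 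Likewise, the paper merely asserts that the all-in orientation has $\gamma^+_{maj}$ equal to $0$ or $1$, whereas you prove it ($\varepsilon=-1$ is infeasible there, and $\varepsilon=1$ forces $q\geq\lceil\frac{n}{2}\rceil-1$). Finally, your derivation of where $n\geq 5$ is needed (nonemptiness of the interval $p\leq a\leq 2p-2$, i.e.\ $p\geq2$) is exactly the role that hypothesis plays in the paper's construction as well.
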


\begin{proof}
Take $K_{1,n-1}=(V,E)$ and let $v$ be its central vertex. For any orientation $D$ of $K_{1,n-1},$ any function $f:V\rightarrow \{-1,1\},$ and any vertex $u\in V\setminus \{v\},$ we have that $f(N^+[u])\geq1\Rightarrow f(u)=1.$ Therefore, for every MODF $g$ of $D,$ $|g^{-1}(1)|\geq\lceil\frac{n-2}2\rceil.$ This implies that $dom^+_{maj}(K_{1,n-1}) \geq -2$ if $n$ is even, and $dom^+_{maj}(K_{1,n-1}) \geq -1$ if $n$ is odd. On the other hand, let $n\geq5$ and  take an orientation $D_1$ of $K_{1,n-1}$ such that $d^+(v)=\lceil\frac{n-2}2\rceil.$ Then the function $h:V\rightarrow \{-1,1\}$ such that $h(u)=1$ if $u\in N^+(v)$ and $h(u)=-1$ otherwise is a MODF of $D_1,$ so $dom^+_{maj}(K_{1,n-1}) = -2$ if $n$ is even, and $dom^+_{maj}(K_{1,n-1}) = -1$ if $n$ is odd.

Now take any orientation $D$ of $K_{1,n-1}.$ If $d^+(v)\geq \lceil\frac{n-2}2\rceil,$ the function $f_1:V\rightarrow \{-1,1\}$ such that $f_1^{-1}(1)= \{v\}\cup S_1$ where $S_1\subseteq N^+(v),$ $|S_1|=\lceil\frac{n-2}2\rceil,$ is a MODF of $D.$  If $d^+(v)< \lceil\frac{n-2}2\rceil,$ the function $f_2:V\rightarrow \{-1,1\}$ such that $f_2^{-1}(1)= N^+[v]\cup S_2$ where $S_2\subseteq N^-(v),$ $|S_2|=\lceil\frac{n-2}2\rceil-|N^+(v)|,$ is a MODF of $D.$ Therefore, $DOM^+_{maj}(K_{1,n-1}) \leq 0$ if $n$ is even, and $DOM^+_{maj}(K_{1,n-1}) \leq 1$ if $n$ is odd. Moreover, if $D'$ is the orientation of $K_{1,n-1}$ such that $d^+(v)=0,$ then $\gamma^+_{maj}(D')= 0$ if $n$ is even, and $\gamma^+_{maj}(D')= 1$ if $n$ is odd. This completes the proof.
\end{proof}

\begin{dfn}
A double star is a graph resulting from joining the central vertices of two disjoint stars.
\end{dfn}

\begin{prop}
Let $G=(V,E)$ be a double star. Then:
 \[DOM^+_{maj}(G) = \left\lbrace
  \begin{array}{c l}
  0  & \text{if  $  n$ is even }\\
    1  & \text{if  $  n$ is odd }
  \end{array}
\right. \]

Moreover, if $n\geq13$ and for every $v\in V$ we have $d(v)\neq2,$ then:
\[dom^+_{maj}(G) = \left\lbrace
  \begin{array}{c l}
  -4  & \text{if  $  n$ is even }\\
    -3  & \text{if  $  n$ is odd }
  \end{array}
\right. \]

and if $5\leq n<13,$ or $n\geq13$ and there exists $u\in V$ with $d(u)=2,$ then:
\[dom^+_{maj}(G) = \left\lbrace
  \begin{array}{c l}
  -2  & \text{if  $  n$ is even }\\
    -1  & \text{if  $  n$ is odd }
  \end{array}
\right. \]
\end{prop}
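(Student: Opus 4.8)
The plan is to work throughout with the two centres $a,b$ of the double star, joined by the central arc, together with their pendant leaves. I write $p=d(a)-1$ and $q=d(b)-1$, so that $p+q=n-2$, both are at least $1$, and a vertex of degree $2$ exists precisely when $\min\{p,q\}=1$. The one fact I would isolate first is the leaf principle already used in Proposition \ref{prop 336}: for any orientation of $G$ and any $f:V\to\{-1,1\}$, a leaf $u$ satisfies $f(N^+[u])\ge 1$ only if $f(u)=1$, because $u\in N^+[u]$ and $|N^+[u]|\le 2$. Consequently every leaf lying in the satisfied set $S=\{v:f(N^+[v])\ge1\}$ is positive, so $S$ can contain at most the two centres as negative vertices. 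This gives the uniform inequality $|f^{-1}(1)|\ge |S|-2\ge \lceil n/2\rceil-2$ for every MODF, hence $w(f)=2|f^{-1}(1)|-n\ge -4$ for even $n$ and $\ge -3$ for odd $n$; this is the lower bound I would use for the $-4/-3$ regime.

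For $DOM^+_{maj}$ I would prove the two matching inequalities separately. For the upper bound I would show that every orientation admits a MODF with at most $\lceil n/2\rceil$ positive vertices: set positive as many out-degree-zero leaves as possible (they are automatically in $S$), and, if these do not already number $\lceil n/2\rceil$, additionally set both centres positive and promote enough source-leaves to positive; once both centres are positive, all their out-neighbours (their sink-leaves, and the other centre) are positive, so both centres enter $S$, and together with the positive leaves this forces $|S|\ge\lceil n/2\rceil$ while keeping $|f^{-1}(1)|\le\lceil n/2\rceil$. A couple of boundary counts, when the number of sink-leaves is just below $\lceil n/2\rceil$, are handled by hand as in Proposition \ref{prop 336}. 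For the lower bound I would exhibit the orientation $D'$ in which every leaf-arc points toward its centre; a short case analysis on the signs of $a,b$, exactly as in the star computation, shows every MODF of $D'$ has $w\ge 0$ (even) or $\ge 1$ (odd), and the all-positive-centres function realises equality. Combining the two inequalities gives $DOM^+_{maj}(G)=0$ or $1$.

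The heart of the argument is separating the two regimes of $dom^+_{maj}$, and here the key structural observation is: both centres can be simultaneously negative and in $S$ only if there are at least five positive leaves, split as at least three out-neighbours at one centre and at least two at the other. Indeed, whichever way the central arc is oriented, one centre has the other (a negative vertex) as an out-neighbour and hence needs three positive out-leaves to reach $f(N^+[\cdot])\ge1$, while the other needs two; these leaves are distinct. From this I would read off both refinements. If a degree-$2$ vertex exists, the centre with a single leaf can never accumulate the required positive out-neighbours while the other centre is negative, so at most one centre is negative in $S$, giving $|f^{-1}(1)|\ge\lceil n/2\rceil-1$ and $w\ge-2$ (even) or $-1$ (odd). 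If no degree-$2$ vertex exists but $5\le n<13$, then $w<-2$ would force $|f^{-1}(1)|=\lceil n/2\rceil-2$ with both centres negative in $S$, whence $\lceil n/2\rceil-2\ge5$, i.e. $n\ge13$, a contradiction; so again $w\ge-2$ or $-1$. The matching constructions are direct: for the $-2/-1$ regime I orient the central arc into a centre with at least two leaves (one exists since $n\ge5$), make that centre negative with two positive out-leaves, and fill up to $\lceil n/2\rceil-1$ positive leaves realised as out-leaves of the centres so that they lie in $S$, reaching $w=-2$ or $-1$; for the $-4/-3$ regime, where $\min\{p,q\}\ge2$ and $p+q=n-2\ge11$ force $\max\{p,q\}\ge3$, I orient the central arc out of the larger centre and make both centres negative using a $3+2$ distribution extended to $\lceil n/2\rceil-2$ positive out-leaves, reaching $w=-4$ or $-3$.

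The step I expect to be the main obstacle is pinning down the thresholds exactly, that is, verifying that the ``five positive leaves, split $3+2$'' requirement is both necessary and, under the stated hypotheses, jointly realisable with the counting constraint $|S|\ge\lceil n/2\rceil$ and the per-centre budgets (the number of positive out-leaves at $a$, resp.\ $b$, cannot exceed $p$, resp.\ $q$). Matching the combinatorial threshold $\lceil n/2\rceil-2\ge5$ and the degree condition $p,q\ge2$ with one of them $\ge3$ precisely against the stated ranges $n\ge13$ versus $5\le n<13$, including the parity bookkeeping that turns $2|f^{-1}(1)|-n$ into the four values $-4,-3,-2,-1$, is where the care is needed; the remaining constructions and the $DOM^+_{maj}$ case analysis are routine adaptations of Propositions \ref{prop 334}--\ref{prop 336}.
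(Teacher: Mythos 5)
Your proposal is correct and follows essentially the same route as the paper's proof: the leaf principle forcing every satisfied leaf to be positive, the $3+2$ count showing both centres can be simultaneously negative and satisfied only with at least five positive out-leaves (hence no vertex of degree $2$), and the same extremal orientations and matching constructions for $DOM^+_{maj}$ and both regimes of $dom^+_{maj}$. The one point where you go beyond the paper is the explicit threshold argument $\lceil n/2\rceil-2\geq 5 \Leftrightarrow n\geq 13$, which cleanly disposes of the $5\leq n<13$ cases that the paper merely declares ``easily verified.''
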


\begin{proof}
Let $u$ and $v$ be the stem vertices of $G,$ and let $D=(V,A)$ be any orientation of $G$ with $uv\in A.$ We define a function $f:V\rightarrow \{-1,1\}$ in the following way:

 If $d^+(u)+d^+(v)-1\geq \lceil\frac{n}2\rceil,$ we take a set $S_1\subseteq (N^+_D(u)\cup N^+_D(v))\setminus\{v\}$ with $|S_1|=\lceil\frac{n}2\rceil,$  and assign $f(x)=1$ for $x\in S_1,$ $f(x)=-1$ for $x\in V\setminus S_1.$

 If $d^+(u)+d^+(v)= \lceil\frac{n}2\rceil,$ we assign $f(x)=1$ for $x\in N^+_D(u)\cup N^+_D(v),$ $f(x)=-1$ otherwise.

 If $d^+(u)+d^+(v)= \lceil\frac{n}2\rceil-1,$ we assign $f(x)=1$ for $x\in N^+_D[u]\cup N^+_D(v),$ $f(x)=-1$ otherwise.

 If $d^+(u)+d^+(v)= \lceil\frac{n}2\rceil-k,$ for $2\leq k\leq \lceil\frac{n}2\rceil-1,$ we take a set $S_2\subseteq N^-_D(u)\cup (N^-_D(v)\setminus\{u\})$ with $|S_2|= k-1,$  and assign $f(x)=1$ for $x\in N^+_D[u]\cup N^+_D(v)\cup S_2,$ $f(x)=-1$ otherwise.

 It is clear that in each case the function $f$ is a MODF of $D$ with $|f^{-1}(1)|=\lceil\frac{n}2\rceil.$ Therefore, $DOM^+_{maj}(G) \leq 0$ if $n$ is even, and $DOM^+_{maj}(G) \leq 1$ if $n$ is odd.

 On the other hand, if $D'$ is the orientation of $G$ such that $d^+(u)=1$ and $d^+(v)=0,$ it is easy to see that $\gamma^+_{maj}(D')= 0$ if $n$ is even, and $\gamma^+_{maj}(D')= 1$ if $n$ is odd.

Now we will prove the statements for $dom^+_{maj}(G)$: The cases with $n<13$ can be  easily verified, so we assume $n\geq 13.$ As in the proof of Proposition \ref{prop 336}, for any orientation $D$ of $G,$ any function $f:V\rightarrow \{-1,1\},$ and any pendant vertex $x,$ we have that $f(N^+[x])\geq1\Rightarrow f(x)=1.$ Therefore, for every MODF $f$ of $D,$ $|f^{-1}(1)|\geq\lceil\frac{n-4}2\rceil,$ that is, $dom^+_{maj}(G)\geq -4$ if $n$ is even and $dom^+_{maj}(G)\geq -3$ if $n$ is odd. Let $u$ and $v$ be the stem vertices of $G,$ and without loss of generality assume $uv\in A.$ For equality to hold we need both $f(N^+[u])$ and $f(N^+[v])$ positive with $f(u)=f(v)=-1.$ This is possible only if $u$ has at least three pendant vertices with value $1$ in its out-neighborhood, and $v$  has at least two, which implies that $G$ has no vertex of degree $2.$ In this case, for the orientation $D'$ of $G$ such that  $|N^+(u)|\geq4,$ $|N^+(v)|\geq2,$ and $|N^+(u)\cup N^+(v)|=\lceil\frac{n-4}2\rceil+1,$ we define the function $g:V\rightarrow \{-1,1\},$ such that $g(x)=1$ if $x\in (N^+(u)\cup N^+(v))\setminus \{v\},$ and $g(x)=-1$ otherwise. Then $\gamma^+_{maj}(D')=-4$ if $n$ is even, and $\gamma^+_{maj}(D')=-3$ if $n$ is odd.

On the other hand, suppose $G$ has a vertex of degree $2.$ As stated in last paragraph, in such a case it is not possible to have $f(u)=f(v)=-1$ with $f(N^+[u])$ and $f(N^+[v])$ positive. However, it can be done for one of the stem vertices, namely $v$. Therefore, for any function $f:V\rightarrow \{-1,1\},$ we have $|f^{-1}(1)|\geq\lceil\frac{n-2}2\rceil,$ that is, $dom^+_{maj}(G)\geq -2$ if $n$ is even, and $dom^+_{maj}(G)\geq -1$ if $n$ is odd. Now consider an orientation $D''$ of $G$ such that $|N^+(v)|=\lceil\frac{n-2}2\rceil,$ and define the function $g:V\rightarrow \{-1,1\},$ such that $g(x)=1$ if $x\in N^+(v),$ and $g(x)=-1$ otherwise. Since $w(g)=-2$ if $n$ is even, and $w(g)=-1$ if $n$ is odd, the proof is complete.
\end{proof}

\begin{prop} \label{prop353}
Let $G=(V,E)$ be a double star. Then $\gamma_{maj}(G)=DOM^+_{maj}(G)$.
\end{prop}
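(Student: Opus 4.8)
The plan is to compute $\gamma_{maj}(G)$ outright and recognise it as the value of $DOM^+_{maj}(G)$ already found in the preceding proposition, namely $0$ for even $n$ and $1$ for odd $n$. Denote by $u$ and $v$ the two stem vertices, with $a$ and $b$ pendant vertices respectively, so that $n=a+b+2$, and assume without loss of generality that $a\ge b\ge 1$. Since $w(f)=2|f^{-1}(1)|-n$ for every $f:V\to\{-1,1\}$, it suffices to minimise $|f^{-1}(1)|$ over all majority dominating functions and then read off the weight.

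For the lower bound I would establish the following claim: for every $f:V\to\{-1,1\}$ the set $S=\{x\in V: f(N[x])\ge 1\}$ satisfies $|S|\le|f^{-1}(1)|$. The reason is that a pendant vertex $x$ belongs to $S$ only when $f(x)=1$ and $f$ takes the value $1$ at its stem, because $N[x]$ consists of $x$ and that stem; hence every satisfied pendant is a $+1$-vertex. The only vertices of $S$ that may carry the value $-1$ are the two stems, and if a stem $w$ has $f(w)=-1$ with $w\in S$, then $f(N(w))=f(N[w])-f(w)\ge 2$; since the other stem contributes at most $+1$ to this sum, the pendant neighbours of $w$ have positive total value, so $w$ has at least one $+1$ pendant, which is unsatisfied because its stem $w$ carries $-1$. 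Assigning to each satisfied $-1$-stem one such unsatisfied $+1$ pendant of its own gives an injection of $S\cap f^{-1}(-1)$ into $f^{-1}(1)\setminus S$ (the two stems have disjoint pendant sets, so no clash occurs), whence $|S|\le|f^{-1}(1)|$. Combined with the majority condition $|S|\ge n/2$ this yields $|f^{-1}(1)|\ge\lceil n/2\rceil$ and therefore $w(f)\ge 2\lceil n/2\rceil-n$, which equals $0$ when $n$ is even and $1$ when $n$ is odd.

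For the matching upper bound I would exhibit one explicit function: put $f(u)=1$, $f(v)=-1$, assign $+1$ to exactly $k=\lceil n/2\rceil-1$ pendant vertices of $u$, and $-1$ to every remaining vertex. A short check shows $k\le a$ (using $a\ge b$ together with the parity of $n$), so this is well defined, and $w(f)=2k-(n-2)=2\lceil n/2\rceil-n$, the target value. The $k$ chosen pendants of $u$ lie in $S$, and $u$ itself lies in $S$ because $f(N[u])=2k-a\ge 1$; hence $|S|\ge k+1=\lceil n/2\rceil\ge n/2$ and $f$ is a majority dominating function of the required weight. Together with the value of $DOM^+_{maj}(G)$ from the previous proposition this gives $\gamma_{maj}(G)=DOM^+_{maj}(G)$.

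The main obstacle is the claim $|S|\le|f^{-1}(1)|$: a $-1$ stem really can be satisfied, so one must argue carefully that it is always ``paid for'' by an unsatisfied $+1$ pendant and that the two stems never compete for the same pendant; everything else is routine parity bookkeeping.
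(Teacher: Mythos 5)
Your proof is correct. Both you and the paper follow the same outer skeleton (lower bound on the weight of any majority dominating function, then an explicit function attaining $2\lceil n/2\rceil-n$, then identification with $DOM^+_{maj}(G)$ from the preceding proposition), but your lower-bound argument is genuinely different in execution. The paper splits into cases according to the values of $f$ at the two stems, and in the case $f(u)=-1$, $f(v)=1$ it uses a normalization step (swapping the values of $u$ and a positive pendant to produce another $\gamma_{maj}$-function) before concluding that every $-1$ vertex is unsatisfied; it also treats $n=4$ separately. You instead prove a single unified counting lemma, $|S|\le|f^{-1}(1)|$ for every $f$, by observing that satisfied pendants are forced to be $+1$ and that each satisfied $-1$ stem can be injectively charged to an unsatisfied $+1$ pendant of its own, the disjointness of the two pendant sets preventing collisions. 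This buys you uniformity: no case analysis, no ``we may assume'' modification of an optimal function, and no special treatment of $n=4$. The paper's case analysis, on the other hand, exposes more structural information about which sign patterns at the stems are actually feasible, which is what its preceding proposition about $dom^+_{maj}$ of double stars exploits. Your upper-bound construction (weight on the larger stem $u$ and $\lceil n/2\rceil-1$ of its pendants) is a minor variant of the paper's (weight on $v$ and roughly half of $N[v]\setminus\{u\}$) and the verification $2k-a\ge b\ge 1$ is sound; note that the inequality $k\le a$ needs only $a\ge b$, not any parity consideration.
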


\begin{proof}
Let $G=(V,E)$ be a graph and let $f:V\rightarrow \{-1,1\}$ be a $\gamma_{maj}(G)$-function. It is easy to verify that for $n=4$, $\gamma_{maj}(G)=0$ (the result appears as well in \cite{mf}). Assume $n\geq5$ and let $u$ and $v$ be the stem vertices of $G.$ If $f(u)=f(v)=-1,$ then $f(N[x]\leq0$ for every $x\in V\setminus \{u,v\},$ so $f$ is not a majority dominating function.

Suppose $d(u)=-1$; if there is $z\in N(u)\setminus \{v\}$ with $f(z)=1,$ then the function $g:V\rightarrow \{-1,1\}$ such that $g(u)=1,$ $g(z)=-1,$ and $g(x)=f(x)$ for every $x\in V\setminus \{u,z\}$ is a $\gamma_{maj}(G)$-function with $g(u)=g(v)=1,$ since $g(N[x])\geq f(N[x])$ for $x\in N[u],$ and $g(N[x])= f(N[x])$ for $x\notin N[u].$ Therefore, we can assume that if $f(u)=-1$ then $f(x)=-1$ for every $x\in N[u]\setminus \{v\},$ which implies $f(N[x])\leq 0$ for every $x\in N[u]\setminus \{v\}.$ Now, for every $x\in N(v)\setminus \{u\}$ we have $f(N[x])\geq 0$ if, and only if, $f(x)=1,$ so for every $x\in V$ we have that $f(x)=-1 \Rightarrow f(N[x])\leq 0.$ Therefore, $|f^{-1}(1)|\geq \lceil\frac{n}2\rceil,$ that is, $w(f)\geq0$ if $n$ is even and $w(f)\geq1$ if $n$ is odd.

  Now suppose $f(u)=f(v)=1.$ Then for every $x\in V\setminus \{u,v\}$ we have $f(N[x])\geq 0$ if, and only if, $f(x)=1.$ Therefore,  $|f^{-1}(1)|\geq \lceil\frac{n-4}2\rceil+2,$ that is,  $w(f)\geq0$ if $n$ is even and $w(f)\geq1$ if $n$ is odd. So in every case we have $\gamma_{maj}(G)\geq DOM^+_{maj}(G)$.

  On the other hand, without loss of generality assume $d(u)\leq d(v),$ and consider the function $h:V\rightarrow \{-1,1\}$ such that $h(x)=1$ for $x\in \{v\}\cup S,$ where $S\subseteq N[v]\setminus \{u\}$ and $|S|= \lceil\frac{n-2}2\rceil,$ and $h(x)=-1$ otherwise. It is clear that $h$ is a majority dominating function of $G$ with $w(h)=0$ if $n$ is even and $w(h)=1$ if $n$ is odd.
\end{proof}

\begin{prop} \label{prop 355}
For any two positive integers $2\leq r\leq s,$  $dom_{maj}^+ (K_{r,s})=4-n.$
\end{prop}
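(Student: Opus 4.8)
The plan is to recast the statement in terms of the number of vertices assigned $+1$. Writing $n=r+s$ and recalling that $w(f)=|f^{-1}(1)|-|f^{-1}(-1)|=2|f^{-1}(1)|-n$, the equality $w(f)=4-n$ is equivalent to $|f^{-1}(1)|=2$. Thus it suffices to prove two things: first, that \emph{every} MODF of \emph{every} orientation of $K_{r,s}$ satisfies $|f^{-1}(1)|\geq 2$ (this gives $\gamma^+_{maj}(D)\geq 4-n$ for all orientations $D$, hence $dom^+_{maj}(K_{r,s})\geq 4-n$); and second, that some orientation admits a MODF with exactly two vertices of value $+1$ (which yields the matching upper bound).

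For the lower bound I would argue directly that one positive vertex can never suffice. If $f\equiv -1$ then $f(N^+[v])\leq -1$ for every $v$, so no vertex is satisfied. If exactly one vertex $w$ has $f(w)=1$, then for any $v$ the sum $f(N^+[v])$ equals $2-|N^+[v]|$ when $w\in N^+[v]$ and $-|N^+[v]|$ otherwise; in either case $f(N^+[v])\geq 1$ forces $v=w$ with $d^+(w)=0$, so at most one vertex is satisfied. Since $r,s\geq 2$ gives $n\geq 4$, a MODF must satisfy at least $\lceil n/2\rceil\geq 2$ vertices, a contradiction. Hence $|f^{-1}(1)|\geq 2$, and this bound is independent of the orientation.

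The substance of the proof is the construction realizing $|f^{-1}(1)|=2$. The key structural observation is that a vertex $v$ with $f(v)=-1$ can satisfy $f(N^+[v])\geq 1$ only if its out-neighbours contribute a net $+2$; with only two $+1$ vertices available, call them $a$ and $b$, this forces $N^+(v)=\{a,b\}$. Because $N^+(v)$ lies entirely in the part of $K_{r,s}$ opposite to $v$, the two positive vertices must lie in a common part, and then only vertices of the other part can be satisfied while carrying value $-1$. To make this count as large as possible one should place $a$ and $b$ in the \emph{smaller} part $X$ (of size $r$, using $r\geq 2$) so that every vertex of the larger part $Y$ (of size $s$) becomes available. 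Concretely, I would orient every edge incident to $a$ or $b$ so that $a$ and $b$ are heads (giving $N^+(a)=N^+(b)=\emptyset$), orient every remaining edge out of $X$, and set $f(a)=f(b)=1$ with $f\equiv -1$ elsewhere. Then each $y\in Y$ has $N^+(y)=\{a,b\}$ and $f(N^+[y])=1$, while $a$ and $b$ are trivially satisfied, for a total of $s+2$ satisfied vertices.

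It then remains to check that this construction actually yields a MODF, i.e. that $s+2\geq\lceil n/2\rceil$; this is where the hypothesis $r\leq s$ is used, since $r\leq s$ gives $\lceil (r+s)/2\rceil\leq s\leq s+2$. The resulting $f$ has weight $2-(n-2)=4-n$, so $\gamma^+_{maj}(D)\leq 4-n$ for this orientation, and combined with the lower bound we conclude $dom^+_{maj}(K_{r,s})=4-n$. I expect the main obstacle to be the structural step that pins down $N^+(v)=\{a,b\}$ for satisfiable negative vertices, together with the accompanying realization that the two $+1$ vertices should be placed in the smaller part; once that is in place, the verification $s+2\geq\lceil n/2\rceil$ is routine.
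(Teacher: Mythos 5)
Your proposal is correct and follows essentially the same route as the paper: the lower bound by showing that a function with at most one vertex of value $+1$ satisfies $f(N^+[v])\geq 1$ for at most one vertex (insufficient since $n\geq 4$), and the upper bound via the identical construction, namely two $+1$ sinks placed in the smaller part with all remaining edges oriented toward the larger part, so that every vertex of the larger part has out-neighbourhood exactly those two sinks. The only difference is presentational: you make explicit the optimization reasoning (why the sinks must share a part and should go in the smaller one) that the paper leaves implicit.
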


\begin {proof}
Take $K_{r,s}=(V,E)$ as in the hypothesis, and consider a vertex $v\in V.$ Let $D$ be an orientation of $K_{r,s}$, and let $g:V\rightarrow \{-1,1\}$ be a function such that $g(v)=1$ and $g(x)=-1$ for $x\in V\setminus \{v\}.$ Then $g(N^+[x])\leq0$ for every $x\in V\setminus \{v\},$ since $g(x)=-1$ and $|g^{-1}(1)|=1.$ Therefore, $g$ is not a MODF of $D$, because $r+s\geq4.$ This implies that for every MODF $f$ of $D$ we have $|f^{-1}(1)|\geq2,$ that is,  $dom^+_{maj}(K_{r,s})\geq4-n.$

On the other hand, let $R$ and $S$ be the defining partite sets of $K_{r,s},$ with $|R|=r$ and $|S|=s,$ and take $\{u,v\}\subseteq R.$ Take the orientation $D'$ of $K_{r,s}$ such that $d^+(u)=d^+(v)=0$ and $d^+(x)=s$ for every $x\in R\setminus \{u,v\}.$ Consider the function $h:V\rightarrow \{-1,1\}$ such that $h(u)=h(v)=1$ and $h(x)=-1$ otherwise. Since $|h^{-1}(1)|=2,$ then $w(h)=4-n.$ Moreover, observe that $h(N^+[y])=1$ for every $y\in S,$ so $h$ is a MODF of $D'.$
\end{proof}

In relation with Proposition \ref{prop 355}, we have the following conjecture:

\begin{con}
Let $2\leq r\leq s$ be two integers. Then:
 \[DOM^+_{maj}(K_{r,s}) = \left\lbrace
  \begin{array}{c l}
  2  & \text{if  $  r+s$ is even }\\
    3  & \text{if  $  r+s$ is odd }
  \end{array}
\right. \]
\end{con}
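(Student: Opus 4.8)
The plan is to first reduce the problem to counting positive vertices. For any MODF $f$ we have $w(f)=2|f^{-1}(1)|-n$, so $w(f)$ has the same parity as $n$ (which already explains the parity split in the statement), and minimizing $w(f)$ is the same as minimizing $|f^{-1}(1)|$. Moreover, if $f$ is a MODF and $g$ agrees with $f$ except that $g(x)=1$ for a single vertex $x$ with $f(x)=-1$, then $g(N^+[u])=f(N^+[u])+2$ for every $u\in N^-[x]$ and $g(N^+[u])=f(N^+[u])$ for all other $u$, so $g$ is again a MODF: being satisfied is monotone under raising values. Consequently, for each orientation $D$ there is a well-defined minimum $k_0(D)=\min\{|f^{-1}(1)|:f\text{ is a MODF of }D\}$, every integer in $[k_0(D),n]$ is realizable as $|f^{-1}(1)|$ of some MODF, and $\gamma^+_{maj}(D)=2k_0(D)-n$. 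The conjecture is therefore equivalent to proving $\max\{k_0(D):D\text{ an orientation of }K_{r,s}\}=\lceil\frac n2\rceil+1$, and by monotonicity the lower bound reduces to the single case $|f^{-1}(1)|=\lceil\frac n2\rceil$.

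For the upper bound I would show that every orientation $D$ of $K_{r,s}$ admits a MODF with exactly $m=\lceil\frac n2\rceil+1$ positive vertices. Putting $f=1$ on a set $P$ with $|P|=m>\frac n2$, a vertex $v\in P$ is satisfied exactly when at least half of its out-neighbors lie in $P$, while a vertex $v\notin P$ is satisfied only when it has at least two more out-neighbors inside $P$ than outside. The number of satisfied vertices equals $|P|-(\text{unsatisfied in }P)+(\text{satisfied outside }P)$, so it suffices to choose $P$ so that the number of vertices of $P$ with a strict majority of out-arcs leaving $P$ exceeds the number of satisfied vertices of the complement by at most one. I would pick $P$ of size $m$ minimizing the number of arcs leaving $P$ and then run an exchange argument across the bipartition $R,S$: if two vertices of $P$ were both badly unsatisfied, swapping one of them for a suitable outside vertex should strictly decrease the leaving-arc count, contradicting minimality. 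Treating the two sides separately (their degrees are $s$ and $r$) and using $|P|>\frac n2$ should force at most one unsatisfied vertex in $P$, hence at least $\lceil\frac n2\rceil$ satisfied vertices.

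For the lower bound I would exhibit a single extremal orientation. The case $K_{2,2}=C_4$ shows that the directed cycle, in which every out-degree equals $1$, is extremal there, which suggests taking $D^*$ to be an as-regular-as-possible balanced orientation of $K_{r,s}$: one in which the out-degrees differ by at most one and which creates neither sinks nor sources whenever the degrees permit. By the reduction it suffices to prove that no MODF of $D^*$ has exactly $\lceil\frac n2\rceil$ positive vertices; equivalently, for every $f$ with $|f^{-1}(1)|=\lceil\frac n2\rceil$ the number of unsatisfied positive vertices is strictly larger than the number of satisfied negative vertices. In such a balanced orientation a negative vertex needs a surplus of two positive out-neighbors to count as satisfied, which is expensive when only $\lceil\frac n2\rceil$ vertices are positive, while some positive vertex is always left pointing into a negative majority; carrying this surplus count through, exactly as in the verification $\gamma^+_{maj}(C_4)=2$, should yield the required strict inequality.

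The main obstacle will be the upper bound, since it must hold uniformly over all orientations of $K_{r,s}$: the exchange argument has to be made robust against orientations that concentrate out-degree very unevenly, the delicate case being many near-sinks on the larger side, and the bookkeeping of which side absorbs the single allowed unsatisfied vertex depends on the parities of $r$ and $s$. A secondary difficulty is pinning down the balanced orientation $D^*$ and verifying the strict count for all residues of $r$ and $s$ modulo the relevant small moduli; it is plausible that a clean closed form for $k_0(D^*)$ can be extracted by a direct surplus-counting argument once $D^*$ is fixed.
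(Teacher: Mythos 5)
You should know at the outset that this statement is one of the paper's \emph{conjectures}: the authors offer no proof of it, so your attempt has to stand entirely on its own. Its first part does stand: raising a single value from $-1$ to $+1$ can only enlarge the set of satisfied vertices, so $k_0(D)=\min\{|f^{-1}(1)|: f \mbox{ a MODF of } D\}$ is well defined, $\gamma^+_{maj}(D)=2k_0(D)-n$, every size in $[k_0(D),n]$ is realizable, and the conjecture is indeed equivalent to $\max_D k_0(D)=\lceil \frac n2\rceil+1$, with the lower bound reducing to ruling out MODFs having exactly $\lceil \frac n2\rceil$ positive vertices in one extremal orientation.

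The fatal problem is the lower bound, and it is not repairable: the conjecture is false, already for $K_{2,3}$. Write $R=\{r_1,r_2\}$, $S=\{s_1,s_2,s_3\}$, and consider exactly the orientation $D^*$ your plan prescribes (out-degrees differing by at most one, no sources and no sinks):
\[
A(D^*)=\{s_1r_1,\ s_2r_1,\ r_1s_3,\ s_3r_2,\ r_2s_1,\ r_2s_2\},
\]
so $d^+(r_2)=2$ and all other out-degrees equal $1$. The function $f$ with $f^{-1}(1)=\{r_1,s_1,s_2\}$ has $|f^{-1}(1)|=\lceil \frac 52\rceil=3$ and is a MODF of $D^*$: $f(N^+[s_1])=f(s_1)+f(r_1)=2$, $f(N^+[s_2])=2$, and $f(N^+[r_2])=f(r_2)+f(s_1)+f(s_2)=1$, so three of the five vertices are satisfied and $w(f)=1<3$. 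The same slack appears in the other out-degree-balanced orientation (the directed $4$-cycle $r_1\, s_2\, r_2\, s_3$ together with the source $s_1$ dominating both $r_i$: make three consecutive cycle vertices positive, and the negative source $s_1$ is then satisfied for free), and indeed in \emph{every} orientation of $K_{2,3}$: up to swapping $r_1\leftrightarrow r_2$ and permuting $S$, an orientation is determined by the multiset of types of the three $s_j$ (both arcs out, both arcs in, or one of the two mixed patterns), which gives thirteen isomorphism classes, and a direct check of each class produces a MODF with three positive vertices. Consequently $DOM^+_{maj}(K_{2,3})=1\neq 3$, and no choice of $D^*$ whatsoever can make your lower-bound step work.

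The extrapolation from $K_{2,2}=C_4$ misleads you for a quantitative reason: $K_{2,2}$ is the only complete bipartite graph with $2\leq r\leq s$ satisfying $rs=r+s$, hence the only one admitting an orientation in which every out-degree is $1$; there a vertex with $f(v)=-1$ can never be satisfied, which is precisely what makes the $C_4$ count tight. For any other $K_{r,s}$, any orientation has vertices of out-degree at least $2$, and a negative such vertex whose out-neighbours are all positive is satisfied at no cost --- this is the vertex $r_2$ above, and it is exactly the configuration your surplus-counting heuristic dismisses as ``expensive.'' Note also that your assessment of where the difficulty lies is inverted: the upper-bound half (every orientation admits a MODF with $\lceil\frac n2\rceil+1$ positive vertices), though only sketched, is the part that may well be true; but proving it would only give $DOM^+_{maj}(K_{r,s})\leq 2$ or $3$, which is not sharp. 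The meaningful project is now to determine the true value of $DOM^+_{maj}(K_{r,s})$, which for $K_{2,3}$ is $1$, strictly below the conjectured $3$.
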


\begin{theorem} \label{prop 356}
For any graph $G$, we have $dom^+_{maj}(G)\leq \gamma_{maj}(G).$
\end{theorem}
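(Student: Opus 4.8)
\emph{The plan} is to show that a $\gamma_{maj}(G)$-function, which lives on the undirected graph $G$, can be reinterpreted as a MODF of a suitably chosen orientation of $G$. Since $dom^+_{maj}(G)=\min\{\gamma^+_{maj}(D): D \text{ is an orientation of } G\}$, it suffices to exhibit a single orientation $D$ for which a minimum-weight majority dominating function $f$ is already a MODF: then $dom^+_{maj}(G)\le \gamma^+_{maj}(D)\le w(f)=\gamma_{maj}(G)$, which is the claim.

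First I would fix a $\gamma_{maj}(G)$-function $f$ and set $P=f^{-1}(1)$, $M=f^{-1}(-1)$, and $S=\{v\in V: f(N[v])\ge 1\}$, so that $|S|\ge\frac n2$ by the definition of a majority dominating function. The orientation $D$ I propose is this: orient every edge joining a vertex of $M$ to a vertex of $P$ \emph{from} its $M$-endpoint \emph{to} its $P$-endpoint, and orient all remaining edges (those inside $P$ and those inside $M$) arbitrarily. The purpose of this rule is that in $D$ no positive vertex ever acquires a negative out-neighbor, while every negative vertex keeps all of its positive neighbors as out-neighbors.

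Next I would verify that $f$ is a MODF of $D$ by proving $S\subseteq S^+$, where $S^+=\{v: f(N^+_D[v])\ge 1\}$. For $v\in P$ the rule forces every out-neighbor of $v$ to lie in $P$ (its negative neighbors all point into $v$), hence $f(N^+[v])=f(v)+f(N^+(v))=1+|N^+(v)|\ge 1$; thus $P\subseteq S^+$ outright, regardless of membership in $S$. For $v\in M\cap S$, write $a=|N(v)\cap P|$ and observe that all $a$ positive neighbors are out-neighbors of $v$, whereas the negative out-neighbors form a subset of $N(v)\cap M$. Since $v\in S$ gives $f(N[v])=-1+a-|N(v)\cap M|\ge 1$, i.e. $a-|N(v)\cap M|\ge 2$, one obtains $f(N^+[v])=-1+a-(\text{number of negative out-neighbors})\ge -1+a-|N(v)\cap M|\ge 1$. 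Hence $S\cap M\subseteq S^+$, and combining the two cases yields $S\subseteq S^+$, so $|S^+|\ge|S|\ge\frac n2$ and $f$ is indeed a MODF of $D$. Concluding, $\gamma^+_{maj}(D)\le w(f)=\gamma_{maj}(G)$, whence $dom^+_{maj}(G)\le\gamma_{maj}(G)$.

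The step requiring care, and the main obstacle, is the negative-vertex case: orienting an edge decreases $f(N^+[v])$ relative to $f(N[v])$ exactly when a \emph{positive} neighbor is turned into an in-neighbor, so I must ensure that for $v\in M\cap S$ no positive neighbor is lost to an in-arc. This is precisely what the $M\to P$ rule guarantees, and it also explains why the arbitrary orientation of the within-$M$ edges does no harm: each such edge can only contribute a negative out-neighbor to $v$, and the surplus $a-|N(v)\cap M|\ge 2$ absorbs every one of them, keeping $f(N^+[v])\ge 1$.
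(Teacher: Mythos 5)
Your proposal is correct and follows essentially the same route as the paper: both orient every edge between $f^{-1}(-1)$ and $f^{-1}(1)$ from the negative endpoint to the positive endpoint (with the remaining edges oriented arbitrarily) and then verify that the $\gamma_{maj}(G)$-function $f$ remains a MODF of this orientation, giving $dom^+_{maj}(G)\leq w(f)=\gamma_{maj}(G)$. Your write-up merely spells out the counting for the negative vertices in more detail than the paper's inequality $f(N_G[u])\leq f(N^+_D[u])$.
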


\begin{proof}
Let $G=(V,E)$ be a graph and let $f:V\rightarrow \{-1,1\}$ be a $\gamma_{maj}(G)$-function. We get the orientation $D=(V,A)$ of $G$ as follows: For any $uv\in E$ with $f(u)=-1$ and $f(v)=1,$ the arc $uv\in A.$ Edges whose vertices are both positive or both negative may be oriented arbitrarily. We will see that $f$ is a MODF of $D$: Take $u\in V.$ If $f(u)=-1,$ then $f^{-1}(1)\cap N_G(u)\subseteq N^+_D(u),$ and $f^{-1}(-1)\cap N^+_D(u)\subseteq N_G(u),$ so $f(N_G[u])\leq f(N^+_D[u]).$ If $f(u)=1,$ then $f^{-1}(-1)\cap N^+_D[u]=\emptyset.$ Therefore, $f$ is a MODF of $D$, so $dom^+_{maj}(G)\leq w(f)= \gamma_{maj}(G).$
\end{proof}

In relation with Theorem \ref{prop 356}, given a graph $G$ we may have $\gamma_{maj}(G)<DOM^+_{maj}(G)$, $\gamma_{maj}(G)=DOM^+_{maj}(G)$, or $\gamma_{maj}(G)>DOM^+_{maj}(G)$. In \cite{mf} it is proven that for paths ($n\geq2$) and cycles ($n\geq3$), $\gamma_{maj}(G)=-2\lceil\frac{n-4}6\rceil$ for $n$ even, and $\gamma_{maj}(G)=1-2\lceil\frac{n-3}6\rceil$ for $n$ odd, so Propositions \ref{prop 334} and \ref{prop 335} imply that for paths and cycles $\gamma_{maj}(G)<DOM^+_{maj}(G)$. It appears as well in \cite{mf} that $\gamma_{maj}(K_{1,n-1})=1$ if $n$ is even, and $\gamma_{maj}(K_{1,n-1})=2$ if $n$ is odd, which along with Proposition \ref{prop 336} means that  $DOM^+_{maj}(K_{1,n-1})<\gamma_{maj}(K_{1,n-1}).$ Proposition \ref{prop353} shows that for double stars $\gamma_{maj}(G)=DOM^+_{maj}(G)$.

\section{Complexity}

In this section we will prove that the decision problem MAJORITY OUT-DOMINATING FUNCTION is NP-complete. This will be accomplished by means of a polynomial reduction from a particular case of the problem IN-DOMINATING SET, known to be NP-complete.  The statements of the problems mentioned above are as follows: \\

MAJORITY OUT-DOMINATING FUNCTION (MODF)

\emph{Instance}: A digraph $D$ and an integer $k\leq n.$

\emph{Question}: Is there a majority out-dominating function of $D$ with weight $k$ or less? \\

IN-DOMINATING SET

\emph{Instance}: A digraph $D'$ of order $n$ with regular out-degree $d>\frac{n-2}4,$ and a positive integer $k< \frac{n}2+1.$

\emph{Question}: Is there an in-dominating set of $D'$ with cardinality $k$ or less? \\

It was proven in \cite {lee} (result appearing as well in \cite {h}) that  $\gamma^-\leq(\frac{\delta^++1}{2\delta^++1})n$. Therefore, our choice of $d$ guarantees $\gamma^-<\frac{n}{2}+1$. As a comment, the quoted result is stated for $\delta^+\geq1$ but holds as well if $\delta^+=0.$

\begin{theorem}
The decision problem MAJORITY OUT-DOMINATING FUNCTION (MODF) is NP-complete.
\end{theorem}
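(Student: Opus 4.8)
The plan is to establish membership in NP and then give a polynomial-time reduction from the restricted version of IN-DOMINATING SET stated above, which we take to be NP-complete as cited.

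Membership in NP is immediate: a function $f:V\to\{-1,1\}$ serves as a certificate, and given $f$ one checks in polynomial time both that $w(f)\le k$ and that $f$ is a MODF, by computing $f(N^+[v])$ for each $v\in V$ and counting those $v$ for which this value is at least $1$.

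For NP-hardness I would start from an instance consisting of a digraph $D'=(V',A')$ of order $n$ with regular out-degree $d>\frac{n-2}4$ together with an integer $k<\frac n2+1$, and build in polynomial time a digraph $D$ and a target weight $k'$ such that $D'$ has an in-dominating set of size at most $k$ if and only if $D$ has a MODF of weight at most $k'$. The design principle is to translate the in-domination requirement ``$N^+[v]\cap S\neq\emptyset$ for every $v$'' into the majority requirement ``$f(N^+[v])\ge1$''. The device I would use exploits that the out-degree is the constant $d$: if to each $v\in V'$ we attach a gadget of $d$ additional out-neighbours intended to carry the value $1$, then writing $S=f^{-1}(1)\cap V'$ and letting $a_v$ count the positive gadget-neighbours of $v$, one computes $f(N^+_D[v])=f(v)+\big(2\,|N^+_{D'}(v)\cap S|-d\big)+\big(2a_v-d\big)$. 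When $a_v=d$ this equals $f(v)+2\,|N^+_{D'}(v)\cap S|$, which is at least $1$ exactly when $v\in S$ or $N^+_{D'}(v)\cap S\neq\emptyset$, i.e. exactly when $v$ is in-dominated by $S$. Thus the gadget converts the signed-majority condition on a closed out-neighbourhood of size $d+1$ into the purely combinatorial in-domination condition, and it is the regularity of the out-degree that makes this threshold shift uniform across all vertices.

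With this correspondence in place, the map $S\mapsto f$ (value $1$ on $S$ and on all gadget vertices, value $-1$ elsewhere) sends an in-dominating set of size $|S|$ to a function whose weight is an affine function of $|S|$, so that the size bound $k$ becomes a weight bound $k'$; conversely, from a MODF of weight at most $k'$ one reads off $S=f^{-1}(1)\cap V'$ and argues that it is in-dominating. Here the hypothesis $d>\frac{n-2}4$, which via the recalled bound $\gamma^-\le\frac{\delta^++1}{2\delta^++1}\,n$ yields $\gamma^-(D')<\frac n2+1$, is what singles out the NP-complete regime we reduce from and keeps the induced target $k'$ inside the admissible range $k'\le|V(D)|$ demanded by the MODF decision problem.

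The delicate point, and the step I expect to be the main obstacle, is that the MODF requirement is the \emph{global} counting condition ``at least half of the vertices are satisfied'', whereas in-domination is the \emph{universal} condition ``every vertex is dominated''; moreover a weight-minimizing solution would prefer to flip gadget vertices from $1$ to $-1$, destroying the threshold shift. To force the loose ``$\ge$ half'' count to certify the universal statement and to pin the gadget vertices to their intended value, the construction must be padded with an auxiliary block of vertices whose size is chosen so that the unsatisfied-vertex budget $\tfrac12|V(D)|$ is almost entirely consumed by vertices that cannot be satisfied cheaply, leaving no slack for a cheaper MODF to evade in-domination by selectively sacrificing gadget or padding vertices. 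Getting these cardinalities to match exactly, and verifying both directions of the equivalence against this padding, is where the real work lies; the regularity of the out-degree and the bound on $\gamma^-$ are precisely the levers that make the bookkeeping close.
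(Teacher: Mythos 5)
Your NP-membership argument is correct, and your high-level plan --- reduce from the same restricted IN-DOMINATING SET, use auxiliary $+1$ vertices to convert the signed threshold $f(N^+[v])\geq 1$ into the in-domination condition, and add padding so that the ``at least half'' count forces every vertex of $D'$ to be satisfied --- is exactly the paper's strategy, and your gadget arithmetic is sound. But the proposal stops precisely where the proof begins: you never construct the padding, never fix the cardinalities, and never verify either direction of the equivalence; you explicitly defer this as ``where the real work lies.'' This deferred step is not routine bookkeeping, and the one concrete design decision you do commit to makes it unworkable as stated. Attaching a \emph{private} gadget of $d$ sink vertices to each $v\in V'$ forces the intended solution to carry $nd$ gadget ones, so the target weight translates into the budget $|f^{-1}(1)|\leq k+nd$ (rather than the paper's $k+d$). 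With so large a budget, no disjoint padding block can do its job. Concretely, the padding must have size exactly half of $|V(D)|$, i.e. $n+nd$ vertices (smaller and it cannot absorb the whole unsatisfied quota; larger and your intended function is no longer a MODF); but if the padding is, say, a disjoint complete digraph $P$ on $n+nd$ vertices, then assigning $+1$ to $\lfloor |P|/2\rfloor+1$ vertices of $P$ and $-1$ everywhere else satisfies every vertex of $P$, hence half of $D$, and uses at most $k+nd$ ones because $\frac{n+nd}{2}+1\leq k+nd$ whenever $d\geq 1$ and $k\geq 1$. So $D$ would admit a MODF of the target weight regardless of whether $D'$ has a small in-dominating set, and the reduction collapses.

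What is missing is the wiring that makes every padding vertex unsatisfiable \emph{under the weight bound}: its closed out-neighbourhood must be so large that a positive sum there would contradict the budget. The paper achieves this with a complete digraph $T$ of order $n+2d$ (exactly half of $|V(D)|=2n+4d$) containing a single \emph{shared} gadget $X$ of only $d$ vertices, joined symmetrically to all of $V(D')$ and to an auxiliary arcless block $D''$ of $2d$ vertices. Sharing $X$ keeps the budget at $k+d$: a vertex of $X$ has $N^+[u]=V(D)$, so its satisfaction would force $w(f)\geq 1$, contradicting $w(f)\leq 2k-2n-2d<0$; a vertex of $T\setminus X$ has $N^+[u]=V(T)$, and making more than half of $T$ positive would need more than $\frac n2+d\geq k+d-1$ ones, again impossible. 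Hence all of $V(D')\cup V(D'')$ must be satisfied, and an exchange argument (taking $f$ to maximize $|X\cap f^{-1}(1)|$) pins $X$ at $+1$, after which $S=V(D')\cap f^{-1}(1)$ is the required in-dominating set of size at most $k$. Your per-vertex gadgets could in principle be salvaged by giving every padding vertex out-arcs to \emph{all} of $V(D)$, so that its satisfaction would force $w(f)\geq 1>2k-2n$; but some such construction, with both directions checked against it, has to be supplied --- as it stands the proposal is a plan for a proof, not a proof.
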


\begin{proof}
It is clear that MODF is in NP.

Consider a  digraph $D'$ of order $n$ with regular out-degree $d>\frac{n-2}4$ and a positive integer $k< \frac{n}2+1.$ Take a complete digraph $T$ of order $n+2d$ (that is, for every $\{u,v\}\subseteq V(T),$ both $uv$ and $vu$ are in $A(T)$), and an empty (di)graph $D''$ of order $2d.$ Let $X$ be a subset of $V(T)$ with $|X|=d.$ We construct the digraph $D$ from the disjoint union of $T, \ D',$ and $D''$ by adding symmetric arcs between every vertex of $D''$ and every vertex of $X,$ and symmetric arcs between every vertex of $D'$ and every vertex of $X.$ It is clear that $D$ can be constructed in polynomial time.

Let $S$ be an in-dominating set of $D'$ of cardinality at most $k.$ We will show that there is a majority out-dominating function of $D$ of weight at most $2k-2n-2d$: Consider the function $f:V(D)\rightarrow\{-1,1\}$ such that $f(v)=1 $ if, and only if, $v\in S\cup X.$ Then the weight of $f$ is at most $2|S|-2n-2d\leq2k-2n-2d.$ If $v\in S,$ then $f(v)=1,$  $v$ has $d$ out-neighbors  in $V(D')$ and $d$ out-neighbors in $X,$ so $f(N^+[v])\geq 1.$ If $v\in V(D')\setminus S,$ then $v$ has $d$ out-neighbors in $X,$ at least one out-neighbor in $S,$  and at most $d-1$ out-neighbors in $V(D')\setminus S,$ which implies that $f(N^+[v])\geq 1.$ Moreover, for every vertex $v\in V(D''),$ $f(N^+[v])= d-1.$ Therefore, $f(N^+[v])\geq 1$ for at least $n+2d$ vertices in $V(D).$ Since the order of $D$ is $2n+4d,$ it follows that  $f$ is a majority out-dominating function in $D.$

Conversely, assume that $\gamma^+_{maj}(D)\leq 2k-2n-2d,$ and let $f$ be a $\gamma^+_{maj}(D)$-function such that $|X\cap f^{-1}(1)|$ is maximum. First, notice that $|f^{-1}(1)|\leq k+d,$ since otherwise $\gamma^+_{maj}(D)=|f^{-1}(1)|-|f^{-1}(-1)|\geq k+d+1-2n-3d+k+1=2k-2n-2d+2.$

Second, we will show that $f(N^+[v])\leq0$ for every $v\in V(T)$: Suppose that there exists $u\in V(T)$ such that $f(N^+[u])>0.$ If $u\in X,$ since $N^+[u]=V(D),$ we have $1\leq f(V(D))=\gamma^+_{maj}(D)\leq 2k-2n-2d,$ which implies that $n+d< k,$ a contradiction to the hypothesis $k< \frac{n}2+1.$ If $u\notin X,$ then $N^+[u]=V(T),$ so $f(v)=1$ for at least $\frac{n}2 +d+1$ vertices. Since  $k< \frac{n}2+1,$ we have $|f^{-1}(1)|> k+d,$ which is a contradiction. Therefore, $f(N^+[v])\leq0$ for every $v\in V(T).$ This implies that $f(N^+[v])>0$ for every $v\in V(D')\cup V(D'').$

Now we will show that $f(v)=1$ for every $v\in X$: Suppose that there exists $u\in X$ such that $f(u)=-1.$ If $f(z)=-1$ for every $z\in V(D'),$ then $f(N^+[z])\leq -3$ for every $z\in V(D'),$ which is a contradiction. So there exists $x\in V(D')$ such that $f(x)=1.$ Consider the function $g:V(D)\rightarrow \{-1,1\}$ such that $g(u)=1, \ g(x)=-1,$ and $g(y)=f(y)$ for every $y\in V(D)\setminus \{u,x\}.$ Since $N^-[u]=V(D),$ we have that if $y\in N^-[x],$ then $g(N^+[y])=f(N^+[y]),$ and if $y\notin N^-[x],$ then $g(N^+[y])=f(N^+[y])+2.$ Since $w(g)=w(f),$ $g$ is a $\gamma^+_{maj}(D)$-function such that  $|X\cap g^{-1}(1)|>|X\cap f^{-1}(1)|,$ contradicting the fact that $|X\cap f^{-1}(1)|$ is maximum among the $\gamma^+_{maj}(D)$-functions. Therefore, $f(v)=1$ for every $v\in X.$

Now take $S=V(D')\cap f^{-1}(1)$ and consider a vertex $v\in V(D').$ Since $f(N^+[v])\geq1,$ then there exists $u\in N^+_{D'}[v]$ such that $f(u)=1.$ This implies that either $v\in S$ or $v$ is in-dominated by a vertex in $S,$ that is, $S$ is an in-dominating set of $D'.$ Since $X\cap S=\emptyset,$ $X\cup S\subseteq f^{-1}(1),$ and $|f^{-1}(1)|\leq k+d,$ it follows that $|S|\leq k.$
\end{proof}

\section{Conclusions and scope}
In this paper we extended the notion of majority dominating function to digraphs. In addition to its applications, the topic is of mathematical interest since the behavior of MODFs is quite different to that of their counterparts in graphs. This is only an introductory work, in which the concept is defined and some basic results are proven.

Basically, two directions of research are suggested through the text, other than getting bounds or actual values for $\gamma^+_{maj}$ ($dom^+_{maj}$ and $DOM^+_{maj}$) of specific classes of digraphs (graphs). One of them is the proof of Conjecture \ref{con 36}, at least for some classes of digraphs; this could help to obtain results on complexity. The other direction is to explore the relation of $\gamma_{maj}$ with $dom^+_{maj}$ and $DOM^+_{maj}$; for example, characterize graphs $G$ satisfying  $\gamma_{maj}(G)<DOM^+_{maj}(G)$, $\gamma_{maj}(G)=DOM^+_{maj}(G)$, and $\gamma_{maj}(G)>DOM^+_{maj}(G)$, as well as those  for which  $\gamma_{maj}(G)=dom^+_{maj}(G)$, like $C_3$.

We hope this paper will be helpful for people working in related topics, and perhaps it will encourage further research in the field.

\end{document}